\documentclass[11pt, reqno ]{amsart}
\usepackage{geometry}                
\usepackage{amsmath,amssymb,amsthm,mathtools}
\usepackage[T1]{fontenc}
\usepackage{lmodern,microtype}
\usepackage[colorlinks=true,linkcolor=blue,citecolor=blue,urlcolor=blue]{hyperref}
\allowdisplaybreaks[2]
\numberwithin{equation}{section}
\newtheorem{theorem}{Theorem}[section]
\newtheorem{lemma}[theorem]{Lemma}
\newtheorem{proposition}[theorem]{Proposition}

\theoremstyle{definition}
\newtheorem{definition}[theorem]{Definition}
\theoremstyle{remark}
\newtheorem{remark}[theorem]{Remark}
\newcommand{\C}{\mathbb C}
\newcommand{\N}{\mathbb Z_{\geq0}}
\newcommand{\slTwo}{\mathfrak{sl}_2}
\newcommand{\gr}{\operatorname{gr}}
\newcommand{\wt}{\operatorname{wt}}
\newcommand{\inw}{\operatorname{in}_{w}}
\newcommand{\qbinom}[2]{\begin{bmatrix}#1\\#2\end{bmatrix}_{q}}

    \advance \topmargin by -\headheight
    \advance \topmargin by -\headsep

    \evensidemargin \oddsidemargin
\numberwithin{equation}{section}

\title[Classical freeness]{Classical freeness of affine vertex algebras of $\mathfrak{sl}_2$ at boundary admissible levels}

\author{Tomoyuki Arakawa $^{1}$}

\email{tomoyuki.arakawa@oist.jp}

\author{Xuanzhong Dai $^{2}$}
\email{xuanzhong.dai@oist.jp}

\address{$^1$ Okinawa Institute of Science and Technology, Okinawa, Japan 904-0495}

\address{$^2$ School of Mathematics, Nanjing University, Nanjing 210093, China}

\begin{document}
\maketitle
\begin{abstract}
We prove that the simple affine vertex algebra $L_k(\mathfrak{sl}_2)$
is classically free at the boundary admissible level, which confirms
a conjecture of Andrews, van Ekeren, and Heluani.
\end{abstract}

\section{Introduction}
\label{sec:introduction}

Vertex algebras provide an algebraic framework for the operator
product expansions of chiral fields in two-dimensional conformal
field theory. Their representation theory is closely connected
with affine Lie algebras, modular forms, and the geometry of
singularities. A central question is how much of the structure
of a vertex algebra can be understood through its commutative
approximations. The $C_2$-algebra and the associated graded algebra
of the Li filtration offer two complementary ways to approach
this question, namely the former records relations modulo derivatives,
while the latter retains their differential structure.

Let $V$ be a finitely strongly generated vertex algebra, and set
\[
    R_V=V/C_2(V),\qquad
    C_2(V)=\operatorname{span}_{\mathbb C}
    \{a_{(-2)}b\mid a,b\in V\}.
\]
The algebra $R_V$ is a finitely generated Poisson algebra.
Li's canonical filtration $F^\bullet V$ endows
$\operatorname{gr}^{F}V$ with the structure of a Poisson vertex
algebra \cite{Li}. Since $\operatorname{gr}^{F}V$ is generated
as a differential algebra by its degree-zero component $R_V$,
there is a canonical surjective homomorphism of Poisson vertex
algebras
\begin{equation}\label{eq:intro-canonical-map}
    \pi_V:J_\infty R_V\twoheadrightarrow
    \operatorname{gr}^{F}V,
\end{equation}
where $J_\infty R_V$ denotes the arc algebra of $R_V$
\cite{AraC2}. The vertex algebra $V$ is called
\emph{classically free} if this map is an isomorphism
\cite{EH,AEH}. Equivalently, all differential relations in its
Li associated graded algebra follow from the algebraic relations
in $R_V$. In geometric terms, writing
\[
    \widetilde X_V=\operatorname{Spec}R_V,\qquad
    \operatorname{SS}(V)=
    \operatorname{Spec}\operatorname{gr}^{F}V,
\]
classical freeness identifies the singular support
$\operatorname{SS}(V)$ with the entire arc space
$J_\infty\widetilde X_V$.

Van Ekeren and Heluani showed that, for finitely strongly
generated quasiconformal vertex algebras for which
\eqref{eq:intro-canonical-map} is an isomorphism, the nodal
degeneration of the first chiral homology of an elliptic curve
is identified with the first Hochschild homology of Zhu's
algebra \cite{EH}. They also established classical freeness
for $L_\ell(\mathfrak{sl}_2)$ at nonnegative integral levels
and for the boundary Virasoro minimal models
$\operatorname{Vir}_{2,2r+1}$, whereas the remaining Virasoro
minimal models fail to be classically free. The Virasoro
examples connect the problem with Rogers--Ramanujan-type
identities and the Hilbert series of arc algebras, as developed
by Bruschek, Mourtada, and Schepers \cite{BMS}. Thus classical
freeness brings together the geometry of arc spaces, the
homology of chiral algebras, and the combinatorics of characters.

The present paper concerns the simple affine vertex algebras of $\mathfrak{sl_2}$ at boundary admissible level, i.e.
\begin{equation}\label{eq:intro-family}
    V_m=L_{k_m}(\mathfrak{sl}_2),\qquad
    k_m=-2+\frac{2}{2m+1},\qquad m\geq1.
\end{equation}
These are the nonintegral boundary admissible levels of $\widehat{\mathfrak{sl}}_2$. 
Boundary admissible representations
have particularly simple product character formulas
\cite{KW}, and their principal quantum Drinfeld--Sokolov
reductions connect them with the boundary Virasoro minimal
series. Andrews, van Ekeren, and Heluani conjectured that
every $V_m$ is classically free \cite{AEH}.
The conjecture asks whether the finite set of relations
visible in $R_{V_m}$ controls all relations among the symbols
of the affine currents and their derivatives.

An additional motivation comes from four-dimensional
$\mathcal N=2$ superconformal field theory. Beem, Lemos,
Liendo, Peelaers, Rastelli, and van Rees associated a vertex
algebra to a protected sector of local operators in such a
theory \cite{BLLPRvR}. The resulting correspondence relates
four-dimensional protected operator products to
two-dimensional chiral operator products. It also identifies
the Schur index with the vacuum supercharacter, and relates the
central charges by
\[
    c_{\mathrm{2d}}=-12c_{\mathrm{4d}},\qquad
    k_{\mathrm{2d}}=-\frac12 k_{\mathrm{4d}}.
\]
The second identity applies to the affine currents arising
from a four-dimensional flavor symmetry. This construction
makes affine vertex algebras at negative fractional levels
natural objects in the study of unitary four-dimensional
theories.

The algebras $V_m$ are proposed to be the chiral algebras of
the $(A_1,D_{2m+1})$ Argyres--Douglas theories
\cite{CS}.
These theories have $\mathfrak{sl}_2$ flavor symmetry and
central charges
\[
    c_{\mathrm{4d}}=\frac m2,\qquad
    k_{\mathrm{4d}}=\frac{8m}{2m+1}.
\]
The predicted affine level is therefore precisely $k_m$,
and the Sugawara central charge satisfies
\[
    \frac{3k_m}{k_m+2}=-6m=-12c_{\mathrm{4d}}.
\]
The agreement of Schur-index computations with affine vacuum
characters provides further evidence for this identification
\cite{CS}. Consequently, understanding relations in these
vertex algebras contributes to understanding the protected
operator spectra of an infinite family of interacting
four-dimensional theories.

Beem and Rastelli conjectured that the
Higgs branch of a four-dimensional $\mathcal N=2$
superconformal field theory is the  associated
variety of its vertex algebra \cite{BR},
\[
    \mathcal M_H\simeq X_V,\qquad
    X_V=\operatorname{Spec}\bigl((R_V)_{\mathrm{red}}\bigr).
\]
For the $(A_1,D_{2m+1})$ theories, the Higgs branch is
$\mathbb C^2/\{\pm1\}$ \cite{BR}, in agreement
with the associated variety of $V_m$ \cite{FM,AraAV},
\[
    X_{V_m}=\mathcal N(\mathfrak{sl}_2)
    \simeq
    \operatorname{Spec}
    \frac{\mathbb C[e,h,f]}{(h^2+4ef)}.
\]
Classical freeness concerns the finer scheme
$\widetilde X_{V_m}$, including its nilpotent structure.
Although reduced associated variety is the same
throughout this family, the full $C_2$-algebra retains
information about the level. The conjecture therefore
addresses a further aspect of the relation between
finite-dimensional Poisson geometry and the differential
structure of chiral operators.

Our main result is the following:
\begin{theorem}\label{thm:main}
For every integer $m\geq1$, the simple affine vertex algebra
$L_{-2+2/(2m+1)}(\mathfrak{sl}_2)$ is classically free.
Equivalently, the canonical homomorphism
\[
    J_\infty R_{V_m}\longrightarrow
    \operatorname{gr}^{F}V_m
\]
is an isomorphism of conformally graded Poisson vertex algebras.
\end{theorem}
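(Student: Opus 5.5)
Since $\pi_{V_m}$ is surjective, it suffices to compare graded characters: I will show that $J_\infty R_{V_m}$ and $\operatorname{gr}^F V_m$ have the same Hilbert series with respect to conformal weight and $\mathfrak h$-weight. Equivalently, the character of $J_\infty R_{V_m}$ should be bounded above by the character of $V_m$, because $\operatorname{gr}^F V_m$ has the same character as $V_m$.

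\textbf{Step 1: the $C_2$-algebra.} The maximal ideal of $V^{k_m}(\mathfrak{sl}_2)$ is generated by a singular vector $v_m$. It has conformal weight $2m+1$ (this is $(p-1)(u-?)$ for $p=2,u=2m+1$) and it generates an $\mathfrak{sl}_2$-module with highest weight $2m$. By the known result of Feigin--Malikov and Arakawa, its image in $R_{V^k}=\C[e,h,f]$ is $e\,(h^2+4ef)^m$ up to scalar, together with its $\mathfrak{sl}_2$-translates. Hence
\[
R_{V_m}=\C[e,h,f]/I_m,
\]
where $I_m$ is generated by the $\mathfrak{sl}_2$-module spanned by $\operatorname{ad}(f)^j\bigl(e(h^2+4ef)^m\bigr)$, for $j=0,\dots,2m$. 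I take this as input from the cited literature (\cite{AraAV}).

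\textbf{Step 2: upper bound for $J_\infty R_{V_m}$.} I pass to a leading-term (Gröbner) degeneration of the differential ideal $J_\infty I_m\subset\C[e_{(-n)},h_{(-n)},f_{(-n)}]$ with respect to a weight $w$ mixing conformal degree and a monomial order. The aim is to exhibit a set of initial monomials of the differential generators $\partial^s\bigl(\operatorname{ad}(f)^j\, e(h^2+4ef)^m\bigr)$. These monomials should cut out a spanning set of monomials, satisfying difference-two type conditions in the style of Feigin--Stoyanovsky and Bruschek--Mourtada--Schepers, whose generating function $\chi_{\mathrm{span}}(z,q)$ can be computed combinatorially. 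This gives
\[
\operatorname{ch} J_\infty R_{V_m}\le \chi_{\mathrm{span}}.
\]
To keep the combinatorics manageable, I would first use the $\mathfrak h$-grading and reduce to a statement about the subalgebra generated by $e,h$ modulo the relations. Alternatively, I would use the quotient by $(f)$ and a filtration argument. The key point is that the initial ideal only needs to be contained in $\inw(J_\infty I_m)$, which gives the inequality without having to verify a Gröbner basis.

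\textbf{Step 3: matching with the character of $V_m$.} The Kac--Wakimoto boundary admissible formula gives a product formula for $\operatorname{ch} V_m$, of theta-quotient type. I then prove the $q$-series identity
\[
\chi_{\mathrm{span}}(z,q)=\operatorname{ch}V_m(z,q),
\]
presumably via an Andrews--Gordon/Rogers--Ramanujan type multisum. This is the identity conjectured in \cite{AEH}. Combining the inequalities
\[
\operatorname{ch}V_m=\operatorname{ch}\operatorname{gr}^F V_m\le \operatorname{ch} J_\infty R_{V_m}\le \chi_{\mathrm{span}}=\operatorname{ch}V_m
\]
forces equality, so $\pi_{V_m}$ is an isomorphism.

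\textbf{Main obstacle.} I expect the main difficulty to lie in Steps 2 and 3 together. One must choose leading monomials sharp enough that the resulting combinatorial count equals the Kac--Wakimoto character exactly, and then prove that fermionic-equals-bosonic identity. My first attempt would be an induction on $m$, mirroring the Virasoro case $\operatorname{Vir}_{2,2r+1}$ treated by van Ekeren--Heluani. Quantum Hamiltonian reduction connects $V_m$ to $\operatorname{Vir}_{2,2m+1}$, which suggests writing $\chi_{\mathrm{span}}$ as a $z$-graded refinement of the Andrews--Gordon sum and verifying the identity via $q$-difference equations in $z$.
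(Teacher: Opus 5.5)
You have the right overall frame, and it is the paper's. Since $J_\infty\bigl(\C[e,h,f]/(eQ^m,hQ^m,fQ^m)\bigr)\twoheadrightarrow J_\infty R_{V_m}\twoheadrightarrow\gr^F V_m$, it suffices to find a coefficientwise upper bound for the first arc algebra that equals the Kac--Wakimoto vacuum character. An ideal contained in the initial ideal is enough for that bound.

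\textbf{The gap.} Steps 2 and 3 are the entire proof, and in your proposal they are only intentions. You never fix the weight or monomial order, and you never say which initial monomials the generators $\partial^s(\cdots)$ have. Nor do you say what the spanning set and $\chi_{\mathrm{span}}$ are, or how the identity with the character is proved. Nothing in the proposal guarantees that a leading-monomial count for this three-colour differential ideal is sharp, and sharpness is exactly what the theorem asserts. As written, the upper bound is therefore not established.

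There is also a minor slip in Step 1. Since $Q=h^2+4ef$ is $\mathfrak{sl}_2$-invariant, $\operatorname{ad}(f)^j(eQ^m)=(\operatorname{ad}(f)^je)Q^m$ vanishes for $j\geq3$. So the singular vector generates the adjoint representation spanned by $eQ^m,hQ^m,fQ^m$, not a module of highest weight $2m$. The ideal is unaffected.

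\textbf{How the paper supplies the missing content.} It proceeds in three concrete steps, and it avoids Gr\"obner bases entirely.
\begin{enumerate}
\item A coarse degeneration with $w(e_i)=w(f_i)=1$ and $w(h_i)=0$. The initial forms of the coefficients of $E(u)Q(u)^m$, $F(u)Q(u)^m$, $H(u)Q(u)^m$ are $4^m$ times the coefficients of $E^{m+1}F^m$, $E^mF^{m+1}$, $E^mF^mH$. So the Hilbert series is bounded by that of $C_m=J_\infty\bigl(\C[e,h,f]/((ef)^m(e,f,h))\bigr)$.
\item A bound on $C_m$ through its graded dual, rather than a monomial count. In multidegree $(a,b,c)$ the dual is the space of separately symmetric polynomials in $\boldsymbol x,\boldsymbol y,\boldsymbol z$ that vanish in three situations: when $m+1$ of the $x$'s coincide with $m$ of the $y$'s; when $m$ of the $x$'s coincide with $m+1$ of the $y$'s; and when $m$ of each coincide with one $z$. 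A filtration pairing $x$- with $y$-variables is followed by a Gordon filtration on the pair variables. The resulting divisors are: collision multiplicity $2\min(i,j)$ between blocks, a linear factor for each size-$m$ block against each residual variable, and $\prod(x_\alpha-y_\beta)$. This gives the explicit fermionic bound $\mathcal S_m(q)$.
\item A closed-form evaluation of $\mathcal S_m(q)$. Euler's identity and the $q$-binomial theorem remove the residual indices, leaving the factor $(q)_{n_m}^2/\bigl((q)_\infty^2(q)_{2n_m+1}\bigr)$. The chain $n_1\geq\cdots\geq n_m$ is then collapsed by two identities: $(q)_n^2=\sum_{r}(-1)^r(2r+1)q^{r(r+1)/2}\qbinom{2n+1}{n-r}$, and the kernel identity $\sum_{j}q^{j(j+d)}/\bigl((q)_{N-j}(q)_j(q)_{j+d}\bigr)=1/\bigl((q)_N(q)_{N+d}\bigr)$. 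Jacobi's identity then yields $(q^{2m+1};q^{2m+1})_\infty^3/(q)_\infty^3$.
\end{enumerate}

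Only the $q$-grading is needed, so your $z$-refinement, induction on $m$, and appeal to quantum Hamiltonian reduction are unnecessary. Your guess of an Andrews--Gordon-type multisum is borne out by the final formula, but it has to be actually derived and proved.
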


The proof compares the Hilbert series of an explicit arc
algebra with the vacuum character. 
Let $Q=h^2+4ef$.
The singular vector calculation recalled in
\cite{AraAV} gives a surjection
\[
    A_m:=
    \mathbb C[e,h,f]/(eQ^m,hQ^m,fQ^m)
    \twoheadrightarrow R_{V_m}.
\]
We obtain an upper bound for the Hilbert series of
$J_\infty A_m$ by combining an auxiliary degeneration
with two filtrations of its graded dual. A $q$-series
identity evaluates this bound as
\begin{equation}\label{eq:intro-character}
    \operatorname{tr}_{V_m}q^{L_0}
    =
    \prod_{n\geq1}
    \frac{(1-q^{(2m+1)n})^3}{(1-q^n)^3},
\end{equation}
which coincides with the boundary admissible vacuum character \cite{KW}.
The canonical surjections supply the reverse inequality,
which proves classical freeness. Under the proposed
Argyres--Douglas identification, this also realizes
the unflavored Schur index as the Hilbert series of
the arc algebra of the full $C_2$-scheme.
As a byproduct, we obtain the following series expressions for the
vacuum character.

\begin{theorem}\label{thm:byproduct}
Let $m\geq1$ be an integer and set
$V=L_{-2+2/(2m+1)}(\mathfrak{sl}_2)$.
The unshifted ordinary character of $V$ is given by
\begin{align*}
 \operatorname{tr}_V q^{L_0}
 &=
 \frac{1}{(q)_\infty^2}
 \sum_{n_1\geq\cdots\geq n_m\geq0}
 \frac{
   q^{\sum_{j=1}^m n_j(n_j+1)}(q)_{n_m}^2
 }{
   \displaystyle
   \left(\prod_{j=1}^{m-1}(q)_{n_j-n_{j+1}}\right)
   (q)_{2n_m+1}
 }
\end{align*}
where
\[
 (q)_n=\prod_{j=1}^n(1-q^j),
 \qquad
 (q)_\infty=\prod_{j\geq1}(1-q^j),
\]
and empty products are understood to be $1$.
\end{theorem}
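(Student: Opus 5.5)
The plan is to get Theorem~\ref{thm:byproduct} as a corollary of the Hilbert series computation behind Theorem~\ref{thm:main}. Once classical freeness is proved, $\operatorname{tr}_{V_m}q^{L_0}$ equals the Hilbert series of $J_\infty R_{V_m}=J_\infty A_m$. So it is enough to show two things:
\begin{itemize}
\item the upper bound for the Hilbert series of $J_\infty A_m$ is exactly the displayed fermionic sum;
\item this sum equals the product \eqref{eq:intro-character}.
\end{itemize}
Both must be established anyway in the proof of Theorem~\ref{thm:main}. The chain of inequalities
\[
\prod_{n\ge1}\frac{(1-q^{(2m+1)n})^3}{(1-q^n)^3}=\operatorname{tr}_{V_m}q^{L_0}\le \operatorname{HS}(J_\infty A_m)\le \text{(fermionic sum)}
\]
then collapses to equalities, and the fermionic expression for the character follows. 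Here the first equality is \cite{KW}, and the first inequality comes from the surjections $J_\infty A_m\twoheadrightarrow J_\infty R_{V_m}\twoheadrightarrow\gr^F V_m$.

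For the upper bound I would degenerate $A_m$ along the weight that makes $Q=h^2+4ef$ as simple as possible. In arc coordinates, the generators $eQ^m,hQ^m,fQ^m$ suggest splitting off two free directions, which accounts for the factor $1/(q)_\infty^2$. The remaining direction, essentially $Q$ together with one more coordinate, is where the Rogers--Ramanujan-type structure lives. Filtering the graded dual by the order of vanishing of the arcs of $Q$, I expect a spanning set indexed by partitions with difference-two type conditions at $m$ nested levels. This is the standard Andrews--Gordon/Bressoud shape, and it gives the weights $q^{n_j(n_j+1)}/(q)_{n_j-n_{j+1}}$. The last level $n_m$ carries the extra factor $(q)_{n_m}^2/(q)_{2n_m+1}$, which should come from the $\mathfrak{sl}_2$-triplet $e,h,f$ modulo the quadratic relation: a $q$-binomial count of monomials in $e,h,f$ of degree $n_m$ that do not involve $Q$.

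The identity between the sum and the product is the main obstacle. I would first rewrite $(q)_{n_m}^2/(q)_{2n_m+1}$ as $\frac{1}{1-q^{2n_m+1}}\qbinom{2n_m}{n_m}^{-1}$-type terms, or better, expand it as a sum over an auxiliary index so the expression becomes an Andrews--Gordon multisum. I would then apply the Bailey lattice/Bailey chain $m-1$ times to reduce to a single sum in $n_m$. For the final base case I would use a known Bailey pair relative to $a=q$, and conclude with the Jacobi triple product. That yields
\[
\frac{(q^{2m+1};q^{2m+1})_\infty^3}{(q)_\infty}\cdot\frac{1}{(q)_\infty^2}
\]
which matches \eqref{eq:intro-character}. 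As a sanity check before committing to this route, I would verify the case $m=1$, where the identity reads
\[
\sum_n q^{n(n+1)}(q)_n^2/\bigl((q)_n^{0}(q)_{2n+1}\bigr)=(q^3;q^3)_\infty^3/(q)_\infty,
\]
a known identity, and compare low-order coefficients for $m=2$.
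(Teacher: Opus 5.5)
Your proposal is correct, and its core is the same as the paper's. The statement is just the Kac--Wakimoto product formula (Lemma~\ref{lem:vacuum-character}) combined with the $q$-series identity
\[
 \sum_{n_1\geq\cdots\geq n_m\geq0}\frac{q^{\sum_j n_j(n_j+1)}(q)_{n_m}^2}{\bigl(\prod_{j=1}^{m-1}(q)_{n_j-n_{j+1}}\bigr)(q)_{2n_m+1}}=\frac{(q^{2m+1};q^{2m+1})_\infty^3}{(q)_\infty},
\]
and the paper proves this identity by exactly the Bailey-chain argument with $a=q$ that you describe. Lemma~\ref{lem:finite} supplies the Bailey pair. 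Lemma~\ref{lem:kernel} is Bailey's lemma with both parameters sent to infinity, used $m-1$ times in finite form and once in limiting form. Jacobi's identity $\sum_r(-1)^r(2r+1)q^{r(r+1)/2}=(q)_\infty^3$, with $q$ replaced by $q^{2m+1}$, finishes the computation.

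\textbf{The detour through classical freeness is unnecessary.} The sandwich through $J_\infty A_m$ plays no role for this statement: once the identity is proved, Kac--Wakimoto alone gives the theorem. This is fortunate, because your sketch of the upper bound is heuristic and does not match how the bound arises in the paper. There, the degeneration $w(e)=w(f)=1$, $w(h)=0$ replaces $Q^m$ by $(4ef)^m$. The graded dual is then filtered by pairing $e$- and $f$-variables, followed by a Gordon filtration on the pairs. The factor $(q)_{n_m}^2/\bigl((q)_\infty^2(q)_{2n_m+1}\bigr)$ comes from summing over the unpaired residual variables, not from counting monomials in $e,h,f$.

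\textbf{The key expansion is left unnamed.} The one non-routine input you do not write down is the Bailey pair itself:
\[
 \frac{(q)_n^2}{(q)_{2n+1}}=\sum_{r=0}^n\frac{(-1)^r(2r+1)q^{r(r+1)/2}}{(q)_{n-r}(q)_{n+r+1}}.
\]
The paper proves it by applying $z\,d/dz$ to the finite $q$-binomial theorem for $(z;q)_{2n+1}$ and setting $z=q^{-n}$.

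\textbf{The order of the chain is slightly off in your description.} This expansion enters at the start, at the innermost index $n_m$, not as a final base case. The chain then sums out $n_m,\ldots,n_2$ with the finite kernel and $n_1$ with the limiting kernel. What remains is a single sum over $r$, not over $n_m$:
\[
 \frac{1}{(q)_\infty}\sum_{r\geq0}(-1)^r(2r+1)q^{(2m+1)r(r+1)/2}.
\]
No Bailey lattice is needed.
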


The paper is organized as follows. In Section \ref{sec:auxiliary-degeneration}, we  introduce some basic notations and 
the auxiliary degeneration. 
In Section \ref{sec:dual-first-filtration}, we describe the resulting graded dual by polynomial spaces and construct a
filtration by paired variables.
In Section \ref{sec:gordon-filtration}, we introduce a Gordon filtration. This filtration together  the filtration in previous section yield the required Hilbert-series bound.
In Section \ref{sec:hilbert-series-bound},  we evaluate the Hilbert-series bound using some $q$-binomial identities. 
In Section \ref{sec:vacuum-character}, we derive the vacuum character formula and complete the proof of the main theorem.

\section{The auxiliary degeneration}
\label{sec:auxiliary-degeneration}

Fix $m\geq1$, and put $p=2m+1$, $k=-2+2/p$, and $V=L_k(\slTwo)$. 
Let $R_V$ be the $C_2$-algebra
and
\begin{equation}\label{eq:C2}
 Q:=h^2+4ef.
\end{equation}
By \cite{AraAV, FM}, there is a surjective conformally graded
algebra homomorphism
\begin{equation}\label{eq:model-surjection}
    \rho_m:A_m\twoheadrightarrow R_V,
    \qquad
    A_m:=
    \mathbb{C}[e,h,f]/(eQ^m,hQ^m,fQ^m).
\end{equation}
Define the unshifted ordinary character
\begin{equation}\label{eq:character}
 H_V(q):=\operatorname{tr}_V q^{L_0}.
\end{equation}

Let
\[
 P=\C[e_i,h_i,f_i\mid i\geq0],\qquad
 E(t)=\sum_{i\geq0}e_it^i,\quad
 H(t)=\sum_{i\geq0}h_it^i,\quad
 F(t)=\sum_{i\geq0}f_it^i,
\]
and $Q(t)=H(t)^2+4E(t)F(t)$. Thus
\begin{equation}\label{eq:B}
 B_m:=J_\infty A_m=P/I_m,\qquad
 I_m=\bigl([t^n]EQ^m,[t^n]HQ^m,[t^n]FQ^m\mid n\geq0\bigr).
\end{equation}
All series coefficients are ordinary coefficients. Equivalently, the
translation derivation is $T(x_i)=(i+1)x_{i+1}$ for $x=e,h,f$.
The conformal grading is
\[
 \wt(e_i)=\wt(h_i)=\wt(f_i)=i+1.
\]
Every conformal-weight component is finite dimensional.
Passing to arc algebras in
\eqref{eq:model-surjection} and composing with the canonical
arc-algebra map gives conformally graded surjections
\begin{equation}\label{eq:arc-surjections}
    B_m
    \xrightarrow{\ J_\infty\rho_m\ }
    J_\infty R_V
    \xrightarrow{\ \pi\ }
    \operatorname{gr}^{F}V.
\end{equation}
 Consequently,
\begin{equation}\label{eq:character-lower-bound}
 H_V(q)\leq H_{J_\infty R_V}(q)\leq H_{B_m}(q),
\end{equation}
where all Hilbert-series inequalities are coefficientwise.

Equip $P$ with the auxiliary grading determined by
\[
 w(e_i)=w(f_i)=1,\qquad w(h_i)=0\qquad(i\geq0).
\]
Thus $P=\bigoplus_{d\geq0}P^w_d$, where
\begin{equation}\label{eq:auxiliary-pieces}
 P^w_d=
 \operatorname{Span}_{\C}\left\{
 \prod_{i\geq0}e_i^{\alpha_i}f_i^{\beta_i}h_i^{\gamma_i}
 \;\middle|\;
 \sum_{i\geq0}(\alpha_i+\beta_i)=d
 \right\}.
\end{equation}
The exponents in \eqref{eq:auxiliary-pieces} are nonnegative integers,
and the exponent sequences have finite support. Let
\begin{equation}\label{eq:auxiliary-filtration}
 F^w_dP=\bigoplus_{j=0}^dP^w_j,\qquad
 F^w_dB_m=(F^w_dP+I_m)/I_m\qquad(d\geq0),
\end{equation}
and put $F^w_{-1}P=F^w_{-1}B_m=0$. These are increasing
multiplicative filtrations. We write
\[
 \gr_wB_m=\bigoplus_{d\geq0}F^w_dB_m/F^w_{d-1}B_m.
\]

\begin{definition}\label{def:initial-ideal}
Let $0\neq g\in P$, and write $g=\sum_{j=0}^d g_j$ with
$g_j\in P^w_j$ and $g_d\neq0$. The initial form of $g$ with
respect to $w$ is its highest auxiliary-weight component
\[
 \inw(g)=g_d.
\]
Set $\inw(0)=0$. For an ideal $I\subseteq P$, its initial ideal
is
\[
 \inw(I)=\bigl(\inw(g)\mid g\in I\bigr)\subseteq P.
\]
\end{definition}

The definition of $\inw(I)$ uses all elements of $I$. In particular,
if $I=(g_\alpha\mid\alpha\in A)$, then
\[
 \bigl(\inw(g_\alpha)\mid\alpha\in A\bigr)
 \subseteq\inw(I),
\]
and this inclusion need not be an equality.

\begin{lemma}\label{lem:initial-associated-graded}
There is a natural isomorphism of auxiliary-graded algebras
\[
 P/\inw(I_m)\xrightarrow{\ \sim\ }\gr_wB_m.
\]
This isomorphism also preserves conformal weight.
\end{lemma}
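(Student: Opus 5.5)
The plan is to write down the map explicitly and check that it is well defined, surjective and injective. This is the standard comparison between initial ideals and associated graded algebras for an increasing filtration coming from a grading.

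First note that $\gr_wB_m$ is generated by symbols of the images of the $e_i,h_i,f_i$. Consider the algebra homomorphism
\[
 \Phi:P\to\gr_wB_m,\qquad g\mapsto [g+I_m]\in F^w_dB_m/F^w_{d-1}B_m \quad(g\in P^w_d),
\]
extended additively over the $w$-homogeneous decomposition $P=\bigoplus_dP^w_d$. It is multiplicative because $P^w_dP^w_{d'}\subseteq P^w_{d+d'}$ and the filtrations are multiplicative. It is surjective because $F^w_dB_m/F^w_{d-1}B_m$ is spanned by images of $P^w_d$, since $F^w_dP=F^w_{d-1}P\oplus P^w_d$. Both $P$ and $I_m$ are homogeneous for the conformal weight: the generators $[t^n]EQ^m$ and the others are homogeneous of weight $n+2m+1$. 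Each $P^w_d$ is also conformally graded. Hence all filtration pieces are conformally graded, and $\Phi$ preserves conformal weight and auxiliary degree.

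Next, compute the kernel. For $g\in P^w_d$, we have $\Phi(g)=0$ if and only if $g\in F^w_{d-1}P+I_m$. That is, there is $r\in F^w_{d-1}P$ with $g-r\in I_m$. If $g\neq0$, then $\inw(g-r)=g$, so $g\in\inw(I_m)$. Conversely, for $h\in I_m$ nonzero with top component $h_d$, we have $h_d=h-(h-h_d)$ with $h-h_d\in F^w_{d-1}P$, so $\Phi(h_d)=0$. Thus the homogeneous parts of $\ker\Phi$ are exactly the $w$-homogeneous elements of $\inw(I_m)$.

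The remaining point is that $\inw(I_m)$ is a $w$-homogeneous ideal. It is generated by $w$-homogeneous elements, so it is spanned by homogeneous elements of the form (monomial)$\cdot\inw(h)$, and each of these is itself an initial form of an element of $I_m$. Also, $\ker\Phi$ is $w$-homogeneous, since $\Phi$ respects the grading. The two ideals therefore agree degree by degree, so $\ker\Phi=\inw(I_m)$ and $\Phi$ induces the claimed isomorphism.

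I do not expect a genuine obstacle. The only care needed is the bookkeeping that $\ker\Phi$ and $\inw(I_m)$ are both homogeneous, so that comparing homogeneous components suffices, and the observation that conformal weight is compatible with everything because $I_m$ is conformally homogeneous.
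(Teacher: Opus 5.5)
Your proposal is correct and follows essentially the same route as the paper. You use the same degree-wise map $\Phi:P\to\gr_wB_m$, surjectivity from $F^w_dP=F^w_{d-1}P\oplus P^w_d$, the same observation that a nonzero homogeneous kernel element $g$ equals $\inw(g-r)$ with $g-r\in I_m$, and homogeneity of $\ker\Phi$ to conclude $\ker\Phi=\inw(I_m)$.
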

\begin{proof}
Define a graded algebra homomorphism
\[
 \Phi:P\longrightarrow\gr_wB_m,\qquad
 \Phi(g_d)=(g_d+I_m)+F^w_{d-1}B_m
 \quad(g_d\in P^w_d).
\]
Every element of $F^w_dB_m/F^w_{d-1}B_m$ is represented by the
auxiliary-weight-$d$ component of an element of $F^w_dP$.
Hence $\Phi$ is surjective.

If $g=g_d+g_{<d}\in I_m$, with $g_d\in P^w_d$ and
$g_{<d}\in F^w_{d-1}P$, then
\[
 g_d+I_m=-g_{<d}+I_m\in F^w_{d-1}B_m.
\]
Thus $\Phi(\inw(g))=0$, and $\inw(I_m)\subseteq\ker\Phi$.
Conversely, let $0\neq g_d\in P^w_d\cap\ker\Phi$. By
\eqref{eq:auxiliary-filtration}, there is $h\in F^w_{d-1}P$ such
that $g_d+I_m=h+I_m$. It follows that
\[
 g_d-h\in I_m,\qquad \inw(g_d-h)=g_d.
\]
Consequently $g_d\in\inw(I_m)$. Since $\ker\Phi$ is homogeneous
for the auxiliary grading, we obtain $\ker\Phi=\inw(I_m)$.
Finally, the auxiliary grading and the conformal grading on $P$
are compatible, and $I_m$ is homogeneous for conformal weight.
The construction therefore preserves conformal weight.
\end{proof}

For $n\geq0$, define
\begin{equation}\label{eq:coefficient-relations}
 \begin{aligned}
  r_n^{(e)}&=[u^n]E(u)^{m+1}F(u)^m,\\
  r_n^{(f)}&=[u^n]E(u)^mF(u)^{m+1},\\
  r_n^{(h)}&=[u^n]E(u)^mF(u)^mH(u).
 \end{aligned}
\end{equation}
Here $[u^n]$ denotes extraction of the coefficient of $u^n$.
For example,
\begin{equation}\label{eq:coefficient-expanded}
 r_n^{(e)}=
 \sum_{\substack{i_1,\ldots,i_{m+1},j_1,\ldots,j_m\geq0\\
 i_1+\cdots+i_{m+1}+j_1+\cdots+j_m=n}}
 e_{i_1}\cdots e_{i_{m+1}}f_{j_1}\cdots f_{j_m}.
\end{equation}
In particular,
\[
 r_0^{(e)}=e_0^{m+1}f_0^m,\qquad
 r_1^{(e)}=(m+1)e_0^me_1f_0^m
             +me_0^{m+1}f_0^{m-1}f_1.
\]
Set
\begin{equation}\label{eq:C}
 J_m=\bigl(r_n^{(e)},r_n^{(f)},r_n^{(h)}\mid n\geq0\bigr)
 \subseteq P,\qquad C_m=P/J_m.
\end{equation}
Thus the generators of $J_m$ are the coefficient polynomials in
\eqref{eq:coefficient-relations}; individual summands of these
polynomials are not separately imposed as relations.

Recall the derivation $T$ of $P$ given by
\[
 T(e_i)=(i+1)e_{i+1},\qquad
 T(f_i)=(i+1)f_{i+1},\qquad
 T(h_i)=(i+1)h_{i+1}.
\]
For every polynomial $A\in\C[e_0,f_0,h_0]$, the Leibniz rule gives
\begin{equation}\label{eq:translation-coefficients}
 A(E(u),F(u),H(u))
 =\sum_{n\geq0}\frac{u^n}{n!}T^n A(e_0,f_0,h_0).
\end{equation}
Consequently,
\begin{equation}\label{eq:J-differential}
 J_m=\bigl(
 T^n(e_0^{m+1}f_0^m),\,
 T^n(e_0^mf_0^{m+1}),\,
 T^n(e_0^mf_0^mh_0)
 \mid n\geq0\bigr).
\end{equation}
Equivalently, $J_m$ is the smallest $T$-stable ideal containing
$(e_0f_0)^m(e_0,f_0,h_0)$. In particular,
\[
 C_m\cong
 J_\infty\left(\C[e,h,f]/\bigl((ef)^m(e,f,h)\bigr)\right).
\]

\begin{proposition}\label{prop:auxiliary-degeneration}
There is a surjective conformally graded algebra homomorphism
\[
 C_m\twoheadrightarrow\gr_wB_m.
\]
In particular,
\begin{equation}\label{eq:degeneration}
 H_{B_m}(q)\leq H_{C_m}(q).
\end{equation}
\end{proposition}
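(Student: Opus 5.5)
By Lemma~\ref{lem:initial-associated-graded}, it suffices to prove the inclusion $J_m\subseteq\inw(I_m)$. This inclusion gives a surjection $C_m=P/J_m\twoheadrightarrow P/\inw(I_m)\cong\gr_wB_m$. Since the conformal grading on $P$ descends to all of these quotients, the map preserves conformal weight. The Hilbert series of $\gr_wB_m$ equals that of $B_m$, because the filtration $F^w_\bullet B_m$ is exhaustive and compatible with conformal weight, and every conformal-weight component is finite dimensional. Hence \eqref{eq:degeneration} follows at once.

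To show $J_m\subseteq\inw(I_m)$, it is enough to exhibit each generator $r_n^{(e)},r_n^{(f)},r_n^{(h)}$ as an initial form of some element of $I_m$, up to a nonzero scalar. The natural candidates are the coefficient polynomials $[t^n]EQ^m$, $[t^n]FQ^m$ and $[t^n]HQ^m$ themselves. Expand
\[
 Q(t)^m=\bigl(H(t)^2+4E(t)F(t)\bigr)^m=\sum_{j=0}^m\binom{m}{j}4^j(EF)^jH^{2(m-j)},
\]
and note that the summand with $j$ copies of $EF$ has auxiliary weight exactly $2j$ in every coefficient, since $w(h_i)=0$ and $w(e_i)=w(f_i)=1$. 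Consequently $[t^n]E(t)Q(t)^m$ decomposes into pieces of auxiliary weights $2j+1$, and its top piece, with $j=m$, is $4^m[t^n]E^{m+1}F^m=4^mr_n^{(e)}$. The same argument gives $4^mr_n^{(f)}$ as the top piece of $[t^n]FQ^m$, and $4^mr_n^{(h)}$ (of weight $2m$) as the top piece of $[t^n]HQ^m$.

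The only point requiring care is that these top components are nonzero, so that they really are the initial forms in the sense of Definition~\ref{def:initial-ideal}. This holds because $r_n^{(e)}$, $r_n^{(f)}$ and $r_n^{(h)}$ are nonzero sums of monomials with positive integer coefficients, as in \eqref{eq:coefficient-expanded}; there is no cancellation. Therefore $\inw([t^n]EQ^m)=4^mr_n^{(e)}$, and similarly for the others, so every generator of $J_m$ lies in $\inw(I_m)$. This gives $J_m\subseteq\inw(I_m)$ and completes the argument. I do not expect a real obstacle here: we only need the easy inclusion between the ideal generated by the initial forms of the generators and $\inw(I_m)$, which is the inclusion noted right after Definition~\ref{def:initial-ideal}, and not equality.
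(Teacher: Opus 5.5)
Your proposal is correct and follows essentially the same route as the paper: the binomial expansion of $Q^m$ identifies $4^m r_n^{(e)},4^m r_n^{(f)},4^m r_n^{(h)}$ as initial forms, which gives $J_m\subseteq\inw(I_m)$. Lemma~\ref{lem:initial-associated-graded} then supplies the surjection, and dimensions compare weight by weight. Your explicit check that these top components are nonzero is a small detail the paper leaves implicit.
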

\begin{proof}
The binomial expansion gives
\[
 Q(u)^m
 =\sum_{j=0}^m\binom{m}{j}4^j
 E(u)^jF(u)^jH(u)^{2(m-j)}.
\]
The summand indexed by $j$ has auxiliary weight $2j$.
Multiplication by $E(u)$ or $F(u)$ increases this weight by $1$,
whereas multiplication by $H(u)$ leaves it unchanged. Therefore,
for every $n\geq0$,
\begin{equation}\label{eq:initial-relations}
 \begin{aligned}
  \inw\bigl([u^n]E(u)Q(u)^m\bigr)&=4^m r_n^{(e)},\\
  \inw\bigl([u^n]F(u)Q(u)^m\bigr)&=4^m r_n^{(f)},\\
  \inw\bigl([u^n]H(u)Q(u)^m\bigr)&=4^m r_n^{(h)}.
 \end{aligned}
\end{equation}
It follows from Definition~\ref{def:initial-ideal} that
\[
 J_m\subseteq\inw(I_m).
\]
Lemma~\ref{lem:initial-associated-graded} now yields
\begin{equation}\label{eq:degeneration-surjection}
 C_m=P/J_m\twoheadrightarrow
 P/\inw(I_m)\xrightarrow{\ \sim\ }\gr_wB_m.
\end{equation}
All maps in \eqref{eq:degeneration-surjection} preserve conformal
weight.

On the conformal-weight-$N$ component of $P$, the auxiliary weights
lie between $0$ and $N$. The induced filtration on $(B_m)_N$ is
therefore finite and exhaustive. Hence
\[
 \dim(B_m)_N=\dim(\gr_wB_m)_N\leq\dim(C_m)_N
 \qquad(N\geq0),
\]
which proves \eqref{eq:degeneration}.
\end{proof}

\section{The dual polynomial space and the first filtration}
\label{sec:dual-first-filtration}

\subsection{The multigrading and the restricted dual}

Equip $P$ with the $\N^3$-grading defined by
\[
 \deg(e_i)=(1,0,0),\qquad
 \deg(f_i)=(0,1,0),\qquad
 \deg(h_i)=(0,0,1).
\]
For $a,b,c\in\N$, its homogeneous component of degree $(a,b,c)$ is
\begin{equation}\label{eq:Pabc}
\begin{split}
 P_{a,b,c}
 =\operatorname{Span}_{\C}\bigl\{&
 e_{i_1}\cdots e_{i_a}
 f_{j_1}\cdots f_{j_b}
 h_{k_1}\cdots h_{k_c}\\
 &\mid i_1,\ldots,i_a,j_1,\ldots,j_b,k_1,\ldots,k_c\in\N\bigr\}.
\end{split}
\end{equation}
A basis is obtained by imposing
\[
 i_1\leq\cdots\leq i_a,\qquad
 j_1\leq\cdots\leq j_b,\qquad
 k_1\leq\cdots\leq k_c.
\]
We also use the jet grading
\[
 \deg_{\mathrm{jet}}(e_i)
 =\deg_{\mathrm{jet}}(f_i)
 =\deg_{\mathrm{jet}}(h_i)=i.
\]
Let $P_{a,b,c}[D]$ denote the span of the monomials in
\eqref{eq:Pabc} satisfying
\begin{equation}\label{eq:jet-degree-component}
 \sum_{\alpha=1}^a i_\alpha+
 \sum_{\beta=1}^b j_\beta+
 \sum_{\gamma=1}^c k_\gamma=D.
\end{equation}
Then
\[
 P=\bigoplus_{a,b,c,D\geq0}P_{a,b,c}[D],
 \qquad
 \dim P_{a,b,c}[D]<\infty,
\]
and every element of $P_{a,b,c}[D]$ has conformal weight
$D+a+b+c$. We set $P_{a,b,c}=0$ if one of $a,b,c$ is negative.

The ideal $J_m$ is homogeneous for both gradings. Define
\[
 C_{m;a,b,c}[D]
 :=
 P_{a,b,c}[D]\big/\bigl(J_m\cap P_{a,b,c}[D]\bigr),
 \qquad
 C_{m;a,b,c}:=\bigoplus_{D\geq0}C_{m;a,b,c}[D].
\]
Their restricted duals are
\[
 P_{a,b,c}^{\vee}:=\bigoplus_{D\geq0}P_{a,b,c}[D]^*,
 \qquad
 C_{m;a,b,c}^{\vee}:=\bigoplus_{D\geq0}C_{m;a,b,c}[D]^*.
\]
The quotient map identifies $C_{m;a,b,c}^{\vee}$ with the
annihilator of $J_m\cap P_{a,b,c}$ in $P_{a,b,c}^{\vee}$.

We denote by $\mathfrak S_i$ the $i$-th symmetric group for any $i\in \mathbb Z_{\geq 0}$, where as convention $\mathfrak S_0$ is understood to be trivial group. Let
\[
 \mathcal S_{a,b,c}
 :=
 \C[x_1,\ldots,x_a,y_1,\ldots,y_b,z_1,\ldots,z_c]
 ^{\mathfrak S_a\times\mathfrak S_b\times\mathfrak S_c},
\]
where all polynomial variables have degree one and empty variable families are allowed.
Define
\begin{equation}\label{eq:dual-polynomial-map}
\begin{split}
 \iota_{a,b,c}:P_{a,b,c}^{\vee}&\longrightarrow\mathcal S_{a,b,c},\\
 \lambda&\longmapsto p_\lambda(\boldsymbol x;\boldsymbol y;\boldsymbol z)
 :=\lambda\left(
 \prod_{\alpha=1}^a E(x_\alpha)
 \prod_{\beta=1}^b F(y_\beta)
 \prod_{\gamma=1}^c H(z_\gamma)\right).
\end{split}
\end{equation}
If $\lambda\in P_{a,b,c}[D]^*$ is extended by zero on the other
jet degrees, then $p_\lambda$ is homogeneous of polynomial degree $D$.
In particular, \eqref{eq:dual-polynomial-map} takes values in
polynomials.

\begin{lemma}\label{lem:dual-polynomial-identification}
The map $\iota_{a,b,c}$ is an isomorphism of graded vector spaces.
\end{lemma}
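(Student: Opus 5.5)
The plan is to work one jet degree at a time and compare explicit bases on both sides. Fix $(a,b,c)$ and $D$, and expand the product that appears in \eqref{eq:dual-polynomial-map}:
\[
 \prod_{\alpha=1}^a E(x_\alpha)\prod_{\beta=1}^b F(y_\beta)\prod_{\gamma=1}^c H(z_\gamma)
 =\sum e_{i_1}\cdots e_{i_a}f_{j_1}\cdots f_{j_b}h_{k_1}\cdots h_{k_c}\,
 x_1^{i_1}\cdots x_a^{i_a}y_1^{j_1}\cdots y_b^{j_b}z_1^{k_1}\cdots z_c^{k_c}.
\]
The sum runs over all index tuples. Since the variables $e_i$ commute among themselves (and likewise the $f_j$ and the $h_k$), we can group the terms by the underlying monomial of $P$.

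Take a basis monomial $\mu$ with sorted exponent multisets $I=\{i_1\le\cdots\le i_a\}$, $J$ and $K$. The coefficient of $\mu$ in the expansion is the symmetrized monomial
\[
 m_{I}(\boldsymbol x)\,m_{J}(\boldsymbol y)\,m_{K}(\boldsymbol z),
\]
where $m_I$ denotes the monomial symmetric polynomial (sum over distinct permutations of the exponent vector). We also need to check the multiplicity: each distinct rearrangement of the exponent vector comes from exactly one index tuple, so each term appears with coefficient $1$. Consequently, if $\lambda=\mu^*$ is the dual basis element to $\mu$ in $P_{a,b,c}[D]^*$, then $p_\lambda=m_I m_J m_K$.

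The products $m_Im_Jm_K$, over triples of partitions with at most $a$, $b$, $c$ parts respectively and total size $D$, form a basis of the degree-$D$ part of $\mathcal S_{a,b,c}$. This holds because $\C[\boldsymbol x]^{\mathfrak S_a}\otimes\C[\boldsymbol y]^{\mathfrak S_b}\otimes\C[\boldsymbol z]^{\mathfrak S_c}=\mathcal S_{a,b,c}$, and monomial symmetric polynomials form a basis in each factor. The empty-family cases are trivial: both sides reduce to the constants $\C$, or to a smaller family.

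Thus $\iota_{a,b,c}$ sends a basis of $P_{a,b,c}[D]^*$ bijectively onto a basis of $(\mathcal S_{a,b,c})_D$. It is linear because $\lambda$ is applied coefficientwise to the polynomial in the variables $\boldsymbol x,\boldsymbol y,\boldsymbol z$. Summing over $D$ then gives the graded isomorphism on the restricted dual.

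The only delicate point is the bookkeeping of coefficients. One must check that no multinomial factor arises, which holds because the expansion ranges over ordered index tuples rather than multisets. One must also confirm that $\lambda$ is applied to a formal series whose coefficient in each fixed polynomial degree $D$ is a finite sum. The latter is clear, since only finitely many monomials of $P$ have degree $(a,b,c)$ and jet degree $D$, so the map is well defined degreewise.
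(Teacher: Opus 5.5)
Your proof is correct and essentially the paper's argument: both rest on the coefficient identity $[\boldsymbol x^{\boldsymbol i}\boldsymbol y^{\boldsymbol j}\boldsymbol z^{\boldsymbol k}]p_\lambda=\lambda\bigl(\prod_\alpha e_{i_\alpha}\prod_\beta f_{j_\beta}\prod_\gamma h_{k_\gamma}\bigr)$. The paper uses this identity to get injectivity and then builds the inverse directly from the coefficients of a separately symmetric polynomial; you instead show, degree by degree, that it sends the dual monomial basis onto the basis $m_I(\boldsymbol x)m_J(\boldsymbol y)m_K(\boldsymbol z)$ of $\mathcal S_{a,b,c}$.
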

\begin{proof}
Commutativity of $P$ implies separate symmetry of $p_\lambda$.
For $\boldsymbol i\in\N^a$, $\boldsymbol j\in\N^b$ and
$\boldsymbol k\in\N^c$, coefficient extraction gives
\begin{equation}\label{eq:dual-polynomial-coefficients}
 [\boldsymbol x^{\boldsymbol i}
  \boldsymbol y^{\boldsymbol j}
  \boldsymbol z^{\boldsymbol k}]p_\lambda
 =
 \lambda\left(
 \prod_{\alpha=1}^a e_{i_\alpha}
 \prod_{\beta=1}^b f_{j_\beta}
 \prod_{\gamma=1}^c h_{k_\gamma}\right).
\end{equation}
Thus $\iota_{a,b,c}$ is injective. Conversely, the coefficients of a
separately symmetric polynomial define a functional on the monomial
basis of $P_{a,b,c}$ by \eqref{eq:dual-polynomial-coefficients}.
Separate symmetry makes this assignment independent of the ordering
of indices within each family. A polynomial has only finitely many
homogeneous components, so the resulting functional belongs to
$P_{a,b,c}^{\vee}$. This construction is inverse to $\iota_{a,b,c}$.
\end{proof}

\subsection{The defining relations and diagonal vanishing}

Set
\[
 W_{a,b,c}:=\iota_{a,b,c}(C_{m;a,b,c}^{\vee})
 \subseteq\mathcal S_{a,b,c}.
\]
Recall the generators $r_n^{(e)},r_n^{(f)},r_n^{(h)}$ of $J_m$
defined in Section~\ref{sec:auxiliary-degeneration}. Their
multidegrees are $(m+1,m,0)$, $(m,m+1,0)$ and $(m,m,1)$,
respectively. It follows that
\begin{equation}\label{eq:ideal-multidegree-component}
\begin{split}
 J_m\cap P_{a,b,c}
 =\sum_{n\geq0}\bigl(
 &r_n^{(e)}P_{a-m-1,b-m,c}
 +r_n^{(f)}P_{a-m,b-m-1,c}\\
 &+r_n^{(h)}P_{a-m,b-m,c-1}\bigr).
\end{split}
\end{equation}

\begin{proposition}\label{prop:dual-diagonals}
The subspace $W_{a,b,c}$ consists precisely of the polynomials
$p\in\mathcal S_{a,b,c}$ satisfying
\begin{equation}\label{eq:forbidden}
\begin{aligned}
 \left.p\right|_{x_1=\cdots=x_{m+1}=y_1=\cdots=y_m}&=0,\\
 \left.p\right|_{x_1=\cdots=x_m=y_1=\cdots=y_{m+1}}&=0,\\
 \left.p\right|_{x_1=\cdots=x_m=y_1=\cdots=y_m=z_1}&=0.
\end{aligned}
\end{equation}
Each condition is imposed only when all the indicated variables
exist. By separate symmetry, the same conditions hold for every
choice of the indicated numbers of variables.
\end{proposition}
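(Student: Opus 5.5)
The plan is to read $W_{a,b,c}$ through the identification of Lemma~\ref{lem:dual-polynomial-identification}. A functional $\lambda\in P_{a,b,c}^{\vee}$ lies in $C_{m;a,b,c}^{\vee}$ exactly when it annihilates $J_m\cap P_{a,b,c}$. By \eqref{eq:ideal-multidegree-component}, this amounts to three families of conditions:
\[
 \lambda\bigl(r_n^{(e)}M\bigr)=0,\qquad \lambda\bigl(r_n^{(f)}M\bigr)=0,\qquad \lambda\bigl(r_n^{(h)}M\bigr)=0,
\]
for all $n\geq0$ and all monomials $M$ in $P_{a-m-1,b-m,c}$, $P_{a-m,b-m-1,c}$ and $P_{a-m,b-m,c-1}$, respectively. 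I will translate each family into a vanishing condition on $p_\lambda$. By symmetry it suffices to treat the $e$-family in detail; the other two are identical, with $F$ or $H(z_1)$ playing the extra role.

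The key computation is to specialise $x_1=\cdots=x_{m+1}=y_1=\cdots=y_m=u$ in the generating product defining $p_\lambda$. Before applying $\lambda$, the product becomes
\[
 E(u)^{m+1}F(u)^m\prod_{\alpha>m+1}E(x_\alpha)\prod_{\beta>m}F(y_\beta)\prod_\gamma H(z_\gamma).
\]
Here I use that $\lambda$ is linear and is applied coefficientwise, so specialisation commutes with $\lambda$. This holds because, on each jet-degree component, $p_\lambda$ is a finite sum and substitution is a linear operation on coefficients. Expanding, I get
\[
 p_\lambda\big|_{\text{diag}}=\sum_{n,\boldsymbol i,\boldsymbol j,\boldsymbol k}u^n\,\boldsymbol x'^{\boldsymbol i}\boldsymbol y'^{\boldsymbol j}\boldsymbol z^{\boldsymbol k}\;\lambda\Bigl(r_n^{(e)}\prod e_{i_\alpha}\prod f_{j_\beta}\prod h_{k_\gamma}\Bigr),
\]
using the definition \eqref{eq:coefficient-relations} of $r_n^{(e)}$. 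Since distinct $(n,\boldsymbol i,\boldsymbol j,\boldsymbol k)$ give distinct monomials in the remaining independent variables, the restriction vanishes identically if and only if $\lambda(r_n^{(e)}M)=0$ for every $n$ and every monomial $M\in P_{a-m-1,b-m,c}$. The same argument gives the $f$ and $h$ conditions. Taking the three together, $p_\lambda$ satisfies \eqref{eq:forbidden} exactly when $\lambda$ annihilates $J_m\cap P_{a,b,c}$.

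I also need to handle the edge cases. When $a<m+1$ or $b<m$, the corresponding component of \eqref{eq:ideal-multidegree-component} is zero, which matches the convention that the condition is dropped when the indicated variables do not exist. The final remark about arbitrary choices of variables follows from separate symmetry of $p$. Surjectivity of $\iota$ onto the polynomials satisfying \eqref{eq:forbidden} is automatic: by Lemma~\ref{lem:dual-polynomial-identification}, every $p\in\mathcal S_{a,b,c}$ equals $p_\lambda$ for some $\lambda\in P_{a,b,c}^\vee$, and the equivalence above then places $\lambda$ in the annihilator.

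The only delicate point I expect is the bookkeeping of the coefficient-extraction identity after specialisation. One must check that the monomials in $u$ and the remaining variables separate the values $\lambda(r_n^{(e)}M)$ without collisions. This holds because the $u$-exponent records $n$ and the other variables are untouched; repeated indices in $M$ cause no problem, since $\lambda$ is evaluated on the commutative product.
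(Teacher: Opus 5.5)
Your proposal is correct and follows essentially the same argument as the paper. You specialize to the diagonal, use $E(u)^{m+1}F(u)^m=\sum_n r_n^{(e)}u^n$ (and its $f$- and $h$-analogues), and extract coefficients to get $\lambda\bigl(r_n^{(e)}P_{a-m-1,b-m,c}\bigr)=0$ and the two analogous conditions. You then conclude via \eqref{eq:ideal-multidegree-component}, with the same treatment of the degenerate cases $a<m+1$ or $b<m$.
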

\begin{proof}
Write $p=p_\lambda$ using Lemma~\ref{lem:dual-polynomial-identification}.
Assume first that $a\geq m+1$ and $b\geq m$, and put
$A=a-m-1$, $B=b-m$. Denote the residual $x$- and $y$-variables by
$\boldsymbol x'=(x_{m+2},\ldots,x_a)$ and
$\boldsymbol y'=(y_{m+1},\ldots,y_b)$. The first specialization is
\begin{equation}\label{eq:first-diagonal-generating-function}
\begin{split}
 &p_\lambda(
 \underbrace{u,\ldots,u}_{m+1},\boldsymbol x';
 \underbrace{u,\ldots,u}_{m},\boldsymbol y';
 \boldsymbol z)\\
 &\quad=
 \lambda\left(
 E(u)^{m+1}F(u)^m
 \prod_{\alpha=1}^{A}E(x'_\alpha)
 \prod_{\beta=1}^{B}F(y'_\beta)
 \prod_{\gamma=1}^{c}H(z_\gamma)\right).
\end{split}
\end{equation}
Since $E(u)^{m+1}F(u)^m=\sum_{n\geq0}r_n^{(e)}u^n$,
the coefficient of
$u^n(\boldsymbol x')^{\boldsymbol i}
(\boldsymbol y')^{\boldsymbol j}\boldsymbol z^{\boldsymbol k}$
in \eqref{eq:first-diagonal-generating-function} is
\begin{equation}\label{eq:relation-multiple-coefficient}
 \lambda\left(
 r_n^{(e)}
 \prod_{\alpha=1}^{A}e_{i_\alpha}
 \prod_{\beta=1}^{B}f_{j_\beta}
 \prod_{\gamma=1}^{c}h_{k_\gamma}\right).
\end{equation}
The monomials following $r_n^{(e)}$ span $P_{A,B,c}$.
Consequently, the first diagonal condition is equivalent to
\[
 \lambda\left(\sum_{n\geq0}
 r_n^{(e)}P_{a-m-1,b-m,c}\right)=0.
\]
If $a<m+1$ or $b<m$, this subspace is zero by our convention,
and the first family of relations imposes no condition.

The remaining specializations insert the series
\[
 E(u)^mF(u)^{m+1}
 \qquad\text{and}\qquad
 E(u)^mF(u)^mH(u),
\]
respectively. The same coefficient calculation shows that the
second and third conditions are equivalent to
\[
 \lambda\left(\sum_{n\geq0}
 r_n^{(f)}P_{a-m,b-m-1,c}\right)=0,
 \qquad
 \lambda\left(\sum_{n\geq0}
 r_n^{(h)}P_{a-m,b-m,c-1}\right)=0,
\]
respectively. By \eqref{eq:ideal-multidegree-component}, the three
conditions together are equivalent to
$\lambda(J_m\cap P_{a,b,c})=0$, as required.
\end{proof}

\subsection{The paired-variable filtration}

Fix $a,b,c\in\N$ and put $M=\min(a,b)$. For $0\leq R\leq M$,
define
\[
 \mathcal A_R:=
 \C[\tau_1,\ldots,\tau_R,x_{R+1},\ldots,x_a,
 y_{R+1},\ldots,y_b,z_1,\ldots,z_c]
 ^{\mathfrak S_R\times\mathfrak S_{a-R}
 \times\mathfrak S_{b-R}\times\mathfrak S_c}
\]
and the specialization map
\begin{equation}\label{eq:pair-specialization}
\begin{split}
 \phi_R:\mathcal S_{a,b,c}&\longrightarrow\mathcal A_R,\\
 p&\longmapsto
 p(\tau_1,\ldots,\tau_R,x_{R+1},\ldots,x_a;
   \tau_1,\ldots,\tau_R,y_{R+1},\ldots,y_b;
   \boldsymbol z).
\end{split}
\end{equation}
For this specialization, the \emph{pair variables} are
$\tau_1,\ldots,\tau_R$, and the \emph{residual variables} are
\[
 x_{R+1},\ldots,x_a,\qquad
 y_{R+1},\ldots,y_b,\qquad
 z_1,\ldots,z_c.
\]
Whenever we identify a selected collection of polynomial variables,
we call the original variables not involved in that identification
the \emph{residual variables}. In the paired-variable filtration
below, these are precisely the variables left unpaired. Further
identifications among the pair variables leave the residual
families unchanged.
Thus the three residual families have sizes $a-R$, $b-R$, and $c$;
in particular, every $z$-variable is residual.
Here the $\tau_i$ are independent variables. Symmetry in
$\tau_1,\ldots,\tau_R$ follows by simultaneously permuting the
first $R$ variables in the $x$- and $y$-families of $p$.
Permutations within each residual family also preserve the image.
Thus \eqref{eq:pair-specialization} has the stated target.
All variables in $\mathcal A_R$ have polynomial degree one,
and $\phi_R$ preserves polynomial degree.

Set
\begin{equation}\label{eq:first-filtration-kernels}
 K_R:=\ker(\phi_R|_{W_{a,b,c}})\quad(0\leq R\leq M),
 \qquad
 K_{M+1}:=W_{a,b,c}.
\end{equation}

\begin{lemma}\label{lem:first-filtration}
The spaces in \eqref{eq:first-filtration-kernels} form an increasing
graded filtration
\begin{equation}\label{eq:first-filtration}
 0=K_0\subseteq K_1\subseteq\cdots\subseteq K_M
 \subseteq K_{M+1}=W_{a,b,c}.
\end{equation}
For each $0\leq R\leq M$, $\phi_R$ induces a graded isomorphism
\begin{equation}\label{eq:first-filtration-image}
 \overline\phi_R:K_{R+1}/K_R
 \xrightarrow{\ \sim\ }\mathcal U_R:=\phi_R(K_{R+1})
 \subseteq\mathcal A_R,
 \qquad
 p+K_R\longmapsto\phi_R(p).
\end{equation}
\end{lemma}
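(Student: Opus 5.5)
The proof will verify three things in turn: that $K_0=0$, that the kernels increase with $R$, and that $\phi_R$ induces the stated isomorphism. Graded-ness is automatic throughout, since each $\phi_R$ preserves polynomial degree, so every kernel and image is a graded subspace.

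First, $\phi_0$ is the identity map on $\mathcal S_{a,b,c}$, because no variables are paired when $R=0$. Hence $K_0=\ker(\phi_0|_{W_{a,b,c}})=0$. The inclusion $K_M\subseteq K_{M+1}=W_{a,b,c}$ holds by definition.

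For monotonicity with $0\leq R<M$, I will factor $\phi_R$ through $\phi_{R+1}$. Define the specialization $\psi_R:\mathcal A_R\to\mathcal A_{R+1}$ by setting $x_{R+1}=y_{R+1}=\tau_{R+1}$ and keeping all other variables. Substituting directly,
\[
 \psi_R\circ\phi_R=\phi_{R+1}\quad\text{on }\mathcal S_{a,b,c},
\]
since both sides identify $x_i$ with $y_i$ for $i\leq R+1$ and leave the remaining variables untouched. The only care needed is that $\psi_R$ is well defined on symmetric polynomials. I will therefore regard it as a map of ambient polynomial rings. On $\phi_R$-images, the result is symmetric in $\tau_1,\ldots,\tau_{R+1}$ because it equals $\phi_{R+1}(p)$. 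Consequently, if $\phi_R(p)=0$ then $\phi_{R+1}(p)=0$, which gives $K_R\subseteq K_{R+1}$.

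For the isomorphism, restrict $\phi_R$ to $K_{R+1}$. Its image is $\mathcal U_R$ by definition, and its kernel is $K_{R+1}\cap\ker\phi_R=K_R$, using $K_R\subseteq K_{R+1}$. The first isomorphism theorem then yields the graded isomorphism $\overline\phi_R:K_{R+1}/K_R\to\mathcal U_R$. When $R=M$, the same argument applies with $K_{M+1}=W_{a,b,c}$.

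No step is a real obstacle. The only point needing attention is the composition identity $\psi_R\circ\phi_R=\phi_{R+1}$, and it is settled by direct substitution.
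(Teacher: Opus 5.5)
Your proposal is correct and matches the paper's proof essentially step by step: $K_0=0$ because $\phi_0$ is the identity, monotonicity follows from the successive-specialization identity $\phi_{R+1}(p)=\phi_R(p)|_{x_{R+1}=y_{R+1}=\tau_{R+1}}$, and the isomorphism follows from $\ker(\phi_R|_{K_{R+1}})=K_{R+1}\cap K_R=K_R$. You also treat the intermediate substitution as a map of ambient polynomial rings, since it need not preserve the symmetry type on all of $\mathcal A_R$; this is a small care point that the paper leaves implicit.
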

\begin{proof}
The map $\phi_0$ is the identity, so $K_0=0$. For $R<M$,
\begin{equation}\label{eq:successive-pair-specialization}
 \phi_{R+1}(p)
 =
 \left.\phi_R(p)\right|_{x_{R+1}=y_{R+1}=\tau_{R+1}}.
\end{equation}
It follows that $K_R\subseteq K_{R+1}$.
The inclusion $K_M\subseteq K_{M+1}$ holds by definition.
Since every specialization preserves polynomial degree, all
these subspaces are graded. Finally,
\[
 \ker(\phi_R|_{K_{R+1}})
 =K_{R+1}\cap K_R=K_R,
\]
which proves \eqref{eq:first-filtration-image}.
\end{proof}

For a fixed $R$, put $s=a-R$ and $t=b-R$, and relabel the residual
$x$- and $y$-variables as $x_1,\ldots,x_s$ and $y_1,\ldots,y_t$.
Thus each $g\in\mathcal U_R$ is of the form
\begin{equation}\label{eq:paired-image-polynomial}
 g(\boldsymbol\tau;\boldsymbol x;\boldsymbol y;\boldsymbol z)
 =
 p(\tau_1,\ldots,\tau_R,\boldsymbol x;
   \tau_1,\ldots,\tau_R,\boldsymbol y;
   \boldsymbol z),
 \qquad p\in K_{R+1}.
\end{equation}
In particular, each pair variable $\tau_i$ replaces one original
$x$-variable and one original $y$-variable. The integer $t=b-R$
counts residual variables and is distinct from the variables $\tau_i$.

\begin{lemma}\label{lem:paired-image-conditions}
Every $g\in\mathcal U_R$ satisfies the following conditions:
\begin{enumerate}
 \item If $R\geq m+1$, then
 \[
 \left.g\right|_{\tau_1=\cdots=\tau_{m+1}}=0.
 \]
 \item If $R\geq m$, then
 \[
 \left.g\right|_{\tau_1=\cdots=\tau_m=\xi}=0
 \qquad
 \bigl(\xi\in\{x_1,\ldots,x_s,y_1,\ldots,y_t,z_1,\ldots,z_c\}\bigr).
 \]
 \item With the convention that an empty product is $1$,
 \[
 \Delta_{s,t}:=\prod_{\alpha=1}^s\prod_{\beta=1}^t
 (x_\alpha-y_\beta)
 \quad\text{divides }g.
 \]
\end{enumerate}
The first two conditions hold for any choice of the indicated
number of pair variables.
\end{lemma}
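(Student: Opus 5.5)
Write $g=\phi_R(p)$ with $p\in K_{R+1}\subseteq W_{a,b,c}$, as in \eqref{eq:paired-image-polynomial}. Conditions (1) and (2) follow from the diagonal vanishing of Proposition~\ref{prop:dual-diagonals}, while (3) comes from $p\in K_{R+1}$, i.e.\ $\phi_{R+1}(p)=0$, together with symmetry.

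For (1), assume $R\geq m+1$. Setting $\tau_1=\cdots=\tau_{m+1}=u$ in $g$ amounts to setting the first $m+1$ of the $x$-variables and the first $m+1$ of the $y$-variables of $p$ equal to $u$. In particular $x_1=\cdots=x_{m+1}=y_1=\cdots=y_m=u$, so the first condition of \eqref{eq:forbidden} gives zero. Substituting the extra identification $y_{m+1}=u$ afterwards does not change this.

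For (2), assume $R\geq m$ and set $\tau_1=\cdots=\tau_m=\xi$. Then $x_1=\cdots=x_m=y_1=\cdots=y_m=\xi$ in $p$. If $\xi$ is a residual $x$-variable, this is an identification of $m+1$ $x$-variables with $m$ $y$-variables, so the first condition in \eqref{eq:forbidden} applies after using separate symmetry to move the indices. If $\xi$ is a residual $y$-variable, the second condition applies, and if $\xi=z_\gamma$, the third. The remark after Proposition~\ref{prop:dual-diagonals}, together with the $\mathfrak S_R$-symmetry of $g$, gives the statement for any choice of $m$ (resp.\ $m+1$) pair variables.

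For (3), which is the main point, fix a residual pair $x_\alpha,y_\beta$. By symmetry of $p$ in the $x$- and $y$-families separately, the polynomial obtained from $g$ by setting $x_\alpha=y_\beta=\tau_{R+1}$ equals, after relabelling, $\phi_{R+1}(p)$. This requires $R+1\leq M$, which holds whenever residual $x$- and $y$-variables both exist. Since $p\in K_{R+1}$, we get $\phi_{R+1}(p)=0$, so $g$ vanishes on the hyperplane $x_\alpha=y_\beta$ and is divisible by $x_\alpha-y_\beta$. The linear forms $x_\alpha-y_\beta$ for distinct pairs $(\alpha,\beta)$ are pairwise non-associate irreducibles in a UFD. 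Hence their product $\Delta_{s,t}$ divides $g$. If $s=0$ or $t=0$ the product is empty and there is nothing to prove.

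The only delicate bookkeeping is checking that the symmetry relabelling in (3) really identifies $g|_{x_\alpha=y_\beta}$ with $\phi_{R+1}(p)$ (up to renaming variables), which uses that $\phi_{R+1}$ pairs the $(R+1)$-st $x$- and $y$-variables and that $p$ is separately symmetric.
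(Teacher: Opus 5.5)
Your proposal is correct and follows the paper's proof essentially step for step. Conditions (1) and (2) come from the diagonal vanishing conditions \eqref{eq:forbidden}, using the same count of coincident original variables and symmetry in the $\tau_i$. Condition (3) comes from $\phi_{R+1}(p)=0$ via the successive specialization, separate symmetry in the residual families, and unique factorization, with the case $s=0$ or $t=0$ (i.e.\ $R=M$) giving the empty product.
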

\begin{proof}
Choose $p\in K_{R+1}$ as in \eqref{eq:paired-image-polynomial}.
Since $p\in W_{a,b,c}$, it satisfies \eqref{eq:forbidden}.

If $\tau_1=\cdots=\tau_{m+1}=u$, then the corresponding
$m+1$ original $x$-variables and $m+1$ original $y$-variables
are all equal to $u$. The first condition in \eqref{eq:forbidden}
already gives vanishing on the diagonal with $m+1$ of these
$x$-variables and $m$ of these $y$-variables identified.
It remains valid when the additional $y$-variable is also set
equal to $u$. This proves (1).

For (2), identifying $\tau_1,\ldots,\tau_m$ identifies $m$
original variables of each of the first two families. Setting
their common value equal to a residual variable gives the
following numbers of coincident original variables:
\[
 \begin{array}{c|ccc}
 \text{residual variable}&x&y&z\\ \hline
 x_\alpha&m+1&m&0\\
 y_\beta&m&m+1&0\\
 z_\gamma&m&m&1
 \end{array}.
\]
The corresponding assertion follows from the first, second,
or third condition of \eqref{eq:forbidden}, respectively.
Symmetry in the $\tau_i$ gives both (1) and (2) for every
choice of pair variables.

Suppose $R<M$. Then $s,t>0$, and $p\in K_{R+1}$ implies
$\phi_{R+1}(p)=0$. By \eqref{eq:successive-pair-specialization},
\begin{equation}\label{eq:residual-diagonal-vanishing}
 \left.g\right|_{x_1=y_1=u}=0.
\end{equation}
Separate symmetry in the residual families yields
$\left.g\right|_{x_\alpha=y_\beta}=0$ for every
$1\leq\alpha\leq s$ and $1\leq\beta\leq t$.
Hence $x_\alpha-y_\beta$ divides $g$ for every such pair.
These linear polynomials are pairwise nonassociate irreducibles,
so unique factorization implies $\Delta_{s,t}\mid g$.
If $R=M$, at least one of $s,t$ is zero, and $\Delta_{s,t}=1$.
This proves (3).
\end{proof}

The polynomial $\Delta_{s,t}$ is invariant under the permutation
group defining $\mathcal A_R$. Hence the quotient
$g/\Delta_{s,t}$ also belongs to $\mathcal A_R$, and
\[
 \mathcal U_R\subseteq\Delta_{s,t}\mathcal A_R.
\]
Finally, taking homogeneous components in
\eqref{eq:first-filtration} and using
\eqref{eq:first-filtration-image} gives
\begin{equation}\label{eq:first-filtration-dimensions}
 \dim C_{m;a,b,c}[D]
 =\dim W_{a,b,c}[D]
 =\sum_{R=0}^{M}\dim\mathcal U_R[D]
 \qquad(D\geq0).
\end{equation}
We next filter each $\mathcal U_R$ with respect to coincidences
among the pair variables.

\section{The Gordon filtration and its multiplicities}
\label{sec:gordon-filtration}

We use the Gordon filtration (e.g. see
\cite{FJMMT}). We give the evaluation maps and
the multiplicity argument explicitly, including the contribution
of the residual variables.

Fix $R,s,t,c\geq0$, and recall that
\[
 \mathcal A_R=
 \C[\tau_1,\ldots,\tau_R,
       \boldsymbol x,\boldsymbol y,\boldsymbol z]
 ^{\mathfrak S_R\times\mathfrak S_s\times
   \mathfrak S_t\times\mathfrak S_c}.
\]
Let $U\subseteq\mathcal A_R$ be a graded $\C$-linear subspace
whose elements satisfy the three conditions in
Lemma~\ref{lem:paired-image-conditions}. We shall apply the
construction to $U=\mathcal U_R$.
The grading in this section is polynomial degree: every pair,
block, and residual variable has degree one.
All evaluations leave the residual variables unchanged.
Thus they may be regarded as coefficient parameters in a
polynomial ring over $\C$; no module structure on $U$ over
that coefficient ring is assumed.

\subsection{Block evaluations and the filtration}

Let $\lambda=(\lambda_1,\ldots,\lambda_\ell)\vdash R$, with
$\lambda_1\geq\cdots\geq\lambda_\ell>0$. Define
\begin{equation}\label{eq:block-evaluation}
 \psi_\lambda(f)(v_1,\ldots,v_\ell;
                  \boldsymbol x,\boldsymbol y,\boldsymbol z)
 :=f(\underbrace{v_1,\ldots,v_1}_{\lambda_1},\ldots,
       \underbrace{v_\ell,\ldots,v_\ell}_{\lambda_\ell};
       \boldsymbol x,\boldsymbol y,\boldsymbol z).
\end{equation}
Each group of identified pair variables is called a block,
and $v_j$ is its block variable. Symmetry in the pair variables
implies that the evaluation is independent of the choice of
disjoint blocks with these sizes, up to relabeling their variables.
In particular, it is symmetric in the variables belonging to
blocks of the same size.

Order the partitions of $R$ decreasingly in lexicographic order.
Thus $\mu>\lambda$ if the first unequal part of $\mu$ is larger
than the corresponding part of $\lambda$; partitions are padded
with zeros when necessary. Set
\begin{equation}\label{eq:gordon-subspaces}
 U_\lambda:=\bigcap_{\mu>\lambda}\ker(\psi_\mu|_U).
\end{equation}
The intersection over the empty set is understood to be $U$.
To describe the filtration precisely, write the partitions as
$\lambda^{(1)}>\cdots>\lambda^{(L)}$. Then
\[
 U_{\lambda^{(j)}}
 =\bigcap_{i<j}\ker(\psi_{\lambda^{(i)}}|_U),
 \qquad
 U_{\lambda^{(j+1)}}
 =U_{\lambda^{(j)}}\cap\ker\psi_{\lambda^{(j)}}
 \quad(j<L).
\]
The last partition is $(1^R)$, whose evaluation is the identity
up to renaming variables. Hence, after adjoining the terminal
subspace $0$, we obtain the finite decreasing graded filtration
\begin{equation}\label{eq:gordon-filtration}
 U=U_{\lambda^{(1)}}\supseteq\cdots
 \supseteq U_{\lambda^{(L)}}\supseteq0.
\end{equation}

For each partition define the successive quotient
\begin{equation}\label{eq:gordon-quotient}
 \mathcal Q_\lambda
 :=U_\lambda/(U_\lambda\cap\ker\psi_\lambda).
\end{equation}
The evaluation induces the graded isomorphism onto its image
\begin{equation}\label{eq:gordon-image}
 \overline\psi_\lambda:\mathcal Q_\lambda
 \xrightarrow{\ \sim\ }\psi_\lambda(U_\lambda),
 \qquad [f]\longmapsto\psi_\lambda(f),
\end{equation}
as its kernel on $U_\lambda$ is exactly the denominator
in \eqref{eq:gordon-quotient}. For $R=0$, the unique partition
is the empty partition with evaluation the identity, and
the filtration consists of $U\supseteq0$.

If a part of $\lambda$ is larger than $m$, its evaluation
identifies at least $m+1$ pair variables. Condition~(1) of
Lemma~\ref{lem:paired-image-conditions} then gives
$\psi_\lambda(U)=0$, and $\mathcal Q_\lambda=0$.
We may therefore restrict attention to partitions
\begin{equation}\label{eq:block-multiplicities}
 \lambda=(m^{r_m},\ldots,1^{r_1}),\qquad
 R=\sum_{i=1}^m i r_i.
\end{equation}
Here $r_i$ is the number of parts equal to $i$; the notation
$i^{r_i}$ denotes repetition of the part $i$.
Write $u_1^{(i)},\ldots,u_{r_i}^{(i)}$ for the variables of
the blocks of size $i$. Each $u_\alpha^{(i)}$ has polynomial
degree one, regardless of the size $i$ of its block.

\begin{remark}\label{rem:gordon-example}
For $R=4$ and $m=2$, the partitions are ordered as
\[
 (4)>(3,1)>(2,2)>(2,1,1)>(1,1,1,1).
\]
The first two evaluations vanish on $U$. The remaining
filtration is
\[
 U=U_{(2,2)}
 \supseteq U_{(2,1,1)}=\ker(\psi_{(2,2)}|_U)
 \supseteq U_{(1^4)}
 =\ker(\psi_{(2,2)}|_U)\cap\ker(\psi_{(2,1,1)}|_U)
 \supseteq0.
\]
For example, suppressing the residual variables,
$\psi_{(2,2)}(f)=f(u,u,v,v)$ and
$\psi_{(2,1,1)}(f)=f(u,u,v,w)$.
\end{remark}

\subsection{Taylor coefficients and cluster multiplicities}

We first record the polynomial dependence on the number of
shifted variables that will be used in the multiplicity argument.

\begin{lemma}\label{lem:taylor-count}
Let $A$ be a polynomial ring over $\C$, let $N\geq1$, and let
$F\in A[T_1,\ldots,T_N]^{\mathfrak S_N}$.
For integers $0\leq v\leq N$, expand
\begin{equation}\label{eq:shifted-evaluation}
 F(\underbrace{\zeta+\delta,\ldots,\zeta+\delta}_{v},
   \underbrace{\zeta,\ldots,\zeta}_{N-v})
 =\sum_{d\geq0}P_d(v)\delta^d.
\end{equation}
For each $d$, these coefficients are the values at
$v=0,\ldots,N$ of a polynomial $P_d(V)\in A[\zeta][V]$
of degree at most $d$ in $V$.
\end{lemma}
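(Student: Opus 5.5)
The plan is to reduce to power sums by linearity. Since $F$ is symmetric in $T_1,\ldots,T_N$ with coefficients in $A$, the fundamental theorem of symmetric functions over $\mathbb Q\subseteq\C$ lets us write
\[
F=G(p_1,\ldots,p_N),\qquad p_j=\sum_{i=1}^N T_i^j,
\]
for some polynomial $G\in A[Y_1,\ldots,Y_N]$. Because $\C$ has characteristic zero, the power sums generate the ring of symmetric polynomials over $A$. It therefore suffices to understand the shifted evaluation of each power sum and then of products of them.

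First, evaluate a single power sum at the shifted point:
\[
p_j(\zeta+\delta,\ldots,\zeta+\delta,\zeta,\ldots,\zeta)
=v(\zeta+\delta)^j+(N-v)\zeta^j
=N\zeta^j+v\bigl((\zeta+\delta)^j-\zeta^j\bigr).
\]
Expanding by the binomial theorem, the coefficient of $\delta^0$ is $N\zeta^j$, which does not depend on $v$. For $d\geq1$, the coefficient of $\delta^d$ is $v\binom{j}{d}\zeta^{j-d}$, which is linear in $v$. The key observation is that every positive power of $\delta$ carries at most one factor of $v$. Hence each evaluated power sum has the form $\sum_d c_{j,d}(V)\delta^d$, where $c_{j,d}\in A[\zeta][V]$ and $\deg_V c_{j,d}\leq \min(d,1)\leq d$.

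Next, take products. Write $\mathcal R_d$ for the space of polynomials in $A[\zeta][V]$ of degree at most $d$ in $V$. Then $\mathcal R_d\cdot\mathcal R_{d'}\subseteq\mathcal R_{d+d'}$. Consider the set of formal series $\sum_d c_d(V)\delta^d$ with each $c_d\in\mathcal R_d$. This set is closed under sums and products, and it contains $A[\zeta]$ (as $\delta^0$-coefficients of degree $0$). By the previous step it also contains the evaluations of the $p_j$. So it contains $G$ applied to these evaluations. That series is a polynomial in $\delta$ whose coefficients $P_d(V)$ have $V$-degree at most $d$. Specializing $V=v$ for $0\leq v\leq N$ recovers \eqref{eq:shifted-evaluation}, because the formulas above agree with the actual evaluation at each integer $v$.

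The main point needing care is making sure the polynomial $P_d(V)$ does not depend on $v$. This holds because $G$ and the coefficients $c_{j,d}$ are fixed once $N$ is fixed, and the only dependence on $v$ is through the explicit factor $v$ in the power-sum formula. No other step presents a real obstacle.
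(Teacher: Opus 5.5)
Your proof is correct, and it takes a different route from the paper's.

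The paper applies the multivariable Taylor formula directly. The coefficient of $\delta^d$ is a sum, over multi-indices $(\alpha_1,\ldots,\alpha_v)$, of mixed partial derivatives of $F$ evaluated on the full diagonal. Grouping these terms by their support and using symmetry, every support of size $j$ contributes the same element $c_{d,j}\in A[\zeta]$. This yields the explicit formula $P_d(V)=\sum_{j=1}^{\min(d,N)}c_{d,j}\binom Vj$.

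You instead write $F=G(p_1,\ldots,p_N)$ with $G$ over $A$, using the fundamental theorem together with Newton's identities. The key observation is that each shifted power sum $N\zeta^j+V\bigl((\zeta+\delta)^j-\zeta^j\bigr)$ is affine in $V$, and its $V$-dependence lies only in positive $\delta$-degree. The result then follows because the subring $\{\sum_d c_d\delta^d:\deg_V c_d\leq d\}$ is closed under products, and $V\mapsto v$ is a ring homomorphism.

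What your route buys: it avoids the multi-index bookkeeping and the support counting, and it makes the reason for the degree bound transparent, namely additivity of power sums. Its only external input is Newton's identities, which are harmless here since $A\supseteq\mathbb{Q}$.

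What the paper's route buys: it is more explicit. It identifies the coefficients as diagonal derivatives and gives the sharper bound $\deg_V P_d\leq\min(d,N)$. Your polynomial need not satisfy that sharper bound when $d>N$. For example, with $N=1$ and $F=T_1^2$, your construction gives $P_2(V)=V^2$, while the paper's gives $P_2(V)=V$; the two agree at $v=0,1$. Only the bound $\deg_V P_d\leq d$ is used in Lemma~\ref{lem:cluster}, so either version suffices.
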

\begin{proof}
For each fixed integer $v$, the left-hand side of
\eqref{eq:shifted-evaluation} is a polynomial in $\delta$,
so the expansion exists and is finite. Its coefficient of
$\delta^d$ is given by the multivariable Taylor formula:
\begin{equation}\label{eq:taylor-multiindex}
 P_d(v)=
 \sum_{\substack{\alpha_1,\ldots,\alpha_v\geq0\\
                   \alpha_1+\cdots+\alpha_v=d}}
 \frac{1}{\alpha_1!\cdots\alpha_v!}
 \left.
 \frac{\partial^d F}
      {\partial T_1^{\alpha_1}\cdots\partial T_v^{\alpha_v}}
 \right|_{T_1=\cdots=T_N=\zeta}.
\end{equation}
For $d\geq1$, group these summands according to the support
$\{i:\alpha_i>0\}$, and let $j$ be its cardinality.
Necessarily $1\leq j\leq\min(d,v)$.
Symmetry of $F$, together with evaluation on the full diagonal,
implies that the total contribution from a fixed support of
size $j$ is independent of that support. It equals
\[
 c_{d,j}:=
 \sum_{\substack{\beta_1,\ldots,\beta_j\geq1\\
                 \beta_1+\cdots+\beta_j=d}}
 \frac{1}{\beta_1!\cdots\beta_j!}
 \left.
 \frac{\partial^d F}
      {\partial T_1^{\beta_1}\cdots\partial T_j^{\beta_j}}
 \right|_{T_1=\cdots=T_N=\zeta}
 \in A[\zeta].
\]
There are $\binom vj$ such supports. Therefore the polynomial
\begin{equation}\label{eq:taylor-binomial}
 P_d(V):=\sum_{j=1}^{\min(d,N)}c_{d,j}\binom Vj,
 \qquad
 \binom Vj=\frac{V(V-1)\cdots(V-j+1)}{j!},
 \quad d\geq1,
\end{equation}
has the required values, since $\binom vj=0$ for integers
$0\leq v<j$. Each summand has degree at most $j\leq d$.
For $d=0$, take the constant polynomial
$P_0(V)=F(\zeta,\ldots,\zeta)$.
\end{proof}

\begin{lemma}\label{lem:cluster}
Let $\lambda$ be as in \eqref{eq:block-multiplicities} and
$f\in U_\lambda$. For every two distinct blocks,
\[
 (u_\alpha^{(i)}-u_\beta^{(j)})^{2\min(i,j)}
 \quad\text{divides}\quad\psi_\lambda(f).
\]
When $i=j$, distinctness means $\alpha\ne\beta$.
\end{lemma}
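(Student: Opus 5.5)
The plan is the standard Gordon-type argument: view $\psi_\lambda(f)$ as the value at one point of a one-parameter family of evaluations, show that the family vanishes at many other points, and conclude using the polynomial dependence in Lemma~\ref{lem:taylor-count}.

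\textbf{Setting up the family.} By symmetry I assume $i\le j$, and set $N=i+j$. Consider the $N$ pair variables that $\psi_\lambda$ sends to $u_\alpha^{(i)}$ or $u_\beta^{(j)}$, and call them $T_1,\ldots,T_N$. I specialize every other pair variable to its block variable exactly as in $\psi_\lambda$. This gives a polynomial
\[
 F\in A[T_1,\ldots,T_N]^{\mathfrak S_N},
\]
where $A$ is the polynomial ring in the remaining block variables and the residual variables. The $\mathfrak S_N$-symmetry comes from the symmetry of $f$ in the pair variables. For $0\le v\le N$, I expand
\[
 F(\underbrace{\zeta+\delta,\ldots,\zeta+\delta}_{v},\underbrace{\zeta,\ldots,\zeta}_{N-v})=\sum_{d\geq0}P_d(v)\delta^d,
\]
as in \eqref{eq:shifted-evaluation}. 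Taking $v=i$ recovers $\psi_\lambda(f)$ with $u_\alpha^{(i)}=\zeta+\delta$ and $u_\beta^{(j)}=\zeta$. So the claim is exactly that $P_d(i)=0$ for all $d<2i$. Since the difference $u_\alpha^{(i)}-u_\beta^{(j)}$ is $\delta$, this vanishing is the required divisibility.

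\textbf{Vanishing at other values of $v$.} For the value $v$, the evaluation of $f$ is a block evaluation $\psi_\mu(f)$, up to a change of variables. The partition $\mu$ is obtained from $\lambda$ by replacing the two parts $i,j$ with $v,N-v$, discarding a zero part if one occurs.

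\begin{itemize}
 \item For $v\in\{0,\ldots,i-1\}\cup\{j+1,\ldots,N\}$, the larger new part $\max(v,N-v)$ is strictly greater than $j$. In particular $v=0$ and $v=N$ merge the two blocks into one block of size $i+j$.
 \item Sorting shows that $\mu>\lambda$ lexicographically. At the first position where the sorted sequences differ, $\mu$ carries the part $\max(v,N-v)>j$, and the parts of $\lambda$ at least this large are unchanged.
 \item Hence $f\in U_\lambda\subseteq\ker\psi_\mu$, so the whole expansion vanishes identically in $\delta$.
 \item If $\mu$ has a part exceeding $m$, the vanishing follows anyway from Lemma~\ref{lem:paired-image-conditions}(1).
\end{itemize}

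Thus $P_d(v)=0$ for these $i+(N-j)=2i$ distinct integers $v$.

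\textbf{Conclusion.} By Lemma~\ref{lem:taylor-count}, $P_d$ agrees at $v=0,\ldots,N$ with a polynomial $P_d(V)\in A[\zeta][V]$ of degree at most $d$. For $d<2i$, this polynomial has at least $2i>d$ roots, so it vanishes identically; in particular $P_d(i)=0$. Therefore $\delta^{2i}=\delta^{2\min(i,j)}$ divides $\psi_\lambda(f)$ after the substitution.

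\textbf{Main obstacle.} The delicate step is the lexicographic comparison $\mu>\lambda$ for every such $v$. One must handle the case where $\lambda$ has further parts equal to or lying between $i$ and $j$, and the degenerate values $v=0$ and $v=N$. I would check this by comparing the multisets of parts greater than or equal to $\max(v,N-v)$ in $\mu$ and in $\lambda$.
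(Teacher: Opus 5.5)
Your proposal is correct and follows the paper's proof essentially step by step. It uses the same symmetric polynomial $F$ in the $N=i+j$ pair variables and the same $2\min(i,j)$ root values $v\in\{0,\ldots,i-1\}\cup\{j+1,\ldots,N\}$, justified by the lexicographic comparison $\mu(v)>\lambda$, and then the same degree count from Lemma~\ref{lem:taylor-count}. The differences are cosmetic. You shift the smaller block ($v=i$) where the paper shifts the larger one ($v=a$); both are valid. Your appeal to Lemma~\ref{lem:paired-image-conditions}(1) for parts exceeding $m$ is harmless but redundant, since $f\in U_\lambda$ already gives vanishing under every $\psi_\mu$ with $\mu>\lambda$.
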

\begin{proof}
Choose the two blocks, write their sizes as $a\geq b$, and put
$N=a+b$. First evaluate all other blocks, leaving the $N$
pair variables in the chosen two blocks independent. The
result is a polynomial $F(T_1,\ldots,T_N)$ symmetric in those
$N$ variables. Its coefficient ring $A$ is the polynomial
ring in the other block variables and all residual variables.
Apply Lemma~\ref{lem:taylor-count} to $F$.

For an integer $0\leq v\leq N$, let $\mu(v)$ be the partition
obtained from $\lambda$ by replacing the two chosen parts
$(a,b)$ by $(N-v,v)$, omitting a zero part and sorting the
result. We claim that
\begin{equation}\label{eq:outer-root-values}
 \mu(v)>\lambda
 \qquad\text{if}\qquad
 v\in\{0,\ldots,b-1\}\cup\{N-b+1,\ldots,N\}.
\end{equation}
Indeed, in the first range $N-v\geq a+1$, while in the
second range $v\geq a+1$. Thus the larger new part
$h=\max(N-v,v)$ is strictly larger than $a$.
All parts larger than $h$ are unchanged, and the number of
parts equal to $h$ increases by one, since neither removed
part is that large. The first difference in the sorted
partitions is consequently an increase. This proves
\eqref{eq:outer-root-values}, including the cases $v=0,N$,
where the two blocks have merged into a single block.

For each $v$ in \eqref{eq:outer-root-values}, the left-hand
side of \eqref{eq:shifted-evaluation} is
$\psi_{\mu(v)}(f)$ with its block variables relabeled and
specialized to the indicated values. Since $f\in U_\lambda$,
this evaluation is zero. Hence, for every $d$, the polynomial
$P_d(V)$ vanishes at all those values of $v$.
Each range has $b$ elements, and the ranges are disjoint
because $a\geq b$. Thus $P_d(V)$ has $2b$ distinct roots.
It has degree at most $d$ by Lemma~\ref{lem:taylor-count},
so
\[
 P_d(V)=0\qquad(0\leq d<2b).
\]
The root count can be taken in the fraction field of
$A[\zeta]$, so it applies to these polynomial coefficients.

Now take $v=a$ in \eqref{eq:shifted-evaluation}. If $u,w$
denote the variables of the chosen blocks of sizes $a,b$,
respectively, this gives
\[
 \left.\psi_\lambda(f)\right|_{u=\zeta+\delta,\,w=\zeta}
 \in\delta^{2b}A[\zeta,\delta].
\]
The substitution $u=\zeta+\delta$, $w=\zeta$ is an invertible
change of polynomial variables, with inverse
$\zeta=w$, $\delta=u-w$. Therefore
$(u-w)^{2b}$ divides $\psi_\lambda(f)$, as required.
\end{proof}

\begin{remark}\label{rem:equal-blocks}
The preceding argument includes two distinct blocks of the
same size $i$. In this case $a=b=i$, $N=2i$, and the root
values are
\[
 0,1,\ldots,i-1,\qquad i+1,\ldots,2i.
\]
The remaining value $v=i$ gives the original two blocks.
Their collision therefore has multiplicity at least $2i$.
These factors must be included even though the two blocks
belong to the same block-size family. Symmetry in that
family alone would not establish this multiplicity.
\end{remark}

\subsection{The common divisor}

Define the cluster divisor by
\begin{equation}\label{eq:cluster-divisor}
 D_{\mathrm{cl},\lambda}:=
 \prod_{i=1}^m\prod_{1\leq\alpha<\beta\leq r_i}
 (u_\alpha^{(i)}-u_\beta^{(i)})^{2i}
 \prod_{1\leq i<j\leq m}
 \prod_{\alpha=1}^{r_i}\prod_{\beta=1}^{r_j}
 (u_\alpha^{(i)}-u_\beta^{(j)})^{2i}.
\end{equation}
The first product runs over pairs of distinct blocks of the
same size, and the second over pairs of different sizes.
Thus every unordered pair of distinct blocks occurs exactly
once, with exponent $2\min(i,j)$.

There are two further families of factors. Set
\begin{equation}\label{eq:size-m-residual-divisor}
 D_{m,\mathrm{res},\lambda}:=
 \prod_{\alpha=1}^{r_m}
 \left(
  \prod_{\beta=1}^s(u_\alpha^{(m)}-x_\beta)
  \prod_{\beta=1}^t(u_\alpha^{(m)}-y_\beta)
  \prod_{\beta=1}^c(u_\alpha^{(m)}-z_\beta)
 \right).
\end{equation}
Recall also the divisor in the residual variables
$\Delta_{s,t}=\prod_{\alpha=1}^s\prod_{\beta=1}^t
(x_\alpha-y_\beta)$ from
Lemma~\ref{lem:paired-image-conditions}.
The full divisor is
\begin{equation}\label{eq:divisor}
 D_\lambda:=D_{\mathrm{cl},\lambda}
             D_{m,\mathrm{res},\lambda}\Delta_{s,t}.
\end{equation}
All products over empty index sets are $1$.

\begin{proposition}\label{prop:gordon-divisor}
For every $f\in U_\lambda$, the polynomial $\psi_\lambda(f)$
is divisible by $D_\lambda$.
\end{proposition}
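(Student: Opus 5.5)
We will show that each of the three factors $D_{\mathrm{cl},\lambda}$, $D_{m,\mathrm{res},\lambda}$ and $\Delta_{s,t}$ divides $\psi_\lambda(f)$ separately. Then we combine them by unique factorization in the polynomial ring $\C[u^{(i)}_\alpha,\boldsymbol x,\boldsymbol y,\boldsymbol z]$. Every factor is a power of a linear form $\ell-\ell'$ with $\ell,\ell'$ distinct variables. Distinct unordered pairs of variables give pairwise nonassociate irreducibles. It therefore suffices to check, for each such irreducible, that its full exponent in $D_\lambda$ divides $\psi_\lambda(f)$. We also check that no irreducible occurs in two of the three factors. The cluster factors involve two block variables. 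The factors of $D_{m,\mathrm{res},\lambda}$ involve one block variable and one residual variable. The factors of $\Delta_{s,t}$ involve an $x$-variable and a $y$-variable. These classes are disjoint, so the exponents do not add.

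For the cluster divisor, we apply Lemma~\ref{lem:cluster} to each unordered pair of distinct blocks. It gives divisibility by $(u^{(i)}_\alpha-u^{(j)}_\beta)^{2\min(i,j)}$, which is exactly the exponent recorded in \eqref{eq:cluster-divisor}. This includes equal-size pairs, as in Remark~\ref{rem:equal-blocks}.

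For the residual factors of $D_{m,\mathrm{res},\lambda}$, take a block $u^{(m)}_\alpha$ of size $m$ and a residual variable $\xi$. Setting $u^{(m)}_\alpha=\xi$ in $\psi_\lambda(f)$ is the same as first specializing $m$ pair variables of $f$ to a common value equal to $\xi$, and then performing the remaining block identifications. By condition (2) of Lemma~\ref{lem:paired-image-conditions}, which holds for any choice of $m$ pair variables, $f$ already vanishes under this specialization. Hence $\psi_\lambda(f)|_{u^{(m)}_\alpha=\xi}=0$. Since $u^{(m)}_\alpha-\xi$ is linear in the independent variable $u^{(m)}_\alpha$, this gives divisibility by $u^{(m)}_\alpha-\xi$.

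For $\Delta_{s,t}$, condition (3) of Lemma~\ref{lem:paired-image-conditions} gives $f=\Delta_{s,t}\,g$. Since $\psi_\lambda$ only substitutes pair variables and leaves residual variables unchanged, it is a ring homomorphism fixing $\Delta_{s,t}$. Therefore $\psi_\lambda(f)=\Delta_{s,t}\,\psi_\lambda(g)$.

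The main point needing care is the passage from the separate divisibilities to divisibility by the product. This requires pairwise coprimality of the distinct linear forms, with the correct multiplicity for each repeated factor. That multiplicity is exactly what Lemma~\ref{lem:cluster} supplies, since each pair of distinct blocks appears only once in $D_{\mathrm{cl},\lambda}$. Empty index sets are handled trivially.
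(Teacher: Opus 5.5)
Your proposal is correct and follows the paper's proof essentially step by step. You use Lemma~\ref{lem:cluster} for the cluster factors, condition~(2) with the factor theorem for the size-$m$ residual factors, and condition~(3) carried through $\psi_\lambda$ for $\Delta_{s,t}$, then combine everything by unique factorization over pairwise nonassociate linear forms.
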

\begin{proof}
Write $G=\psi_\lambda(f)$. Lemma~\ref{lem:cluster} supplies
each factor in \eqref{eq:cluster-divisor}, with its stated
multiplicity.

Consider a block of size $m$ with variable $u_\alpha^{(m)}$.
Setting $u_\alpha^{(m)}=x_\beta$ makes the $m$ original
pair variables in that block equal to the residual variable
$x_\beta$. Condition~(2) of
Lemma~\ref{lem:paired-image-conditions} implies that the
resulting evaluation is zero, also after evaluating the
other blocks. The polynomial factor theorem therefore gives
$u_\alpha^{(m)}-x_\beta\mid G$. The same argument applies
to $y_\beta$ and $z_\beta$, giving every factor in
\eqref{eq:size-m-residual-divisor}.
Here it is specifically the block size $m$ that is used:
if $r_m=0$, this product is $1$, even if some smaller size
is the largest part present in $\lambda$.

Finally, condition~(3) gives $f=\Delta_{s,t}f_0$ for a
polynomial $f_0$. Since $\psi_\lambda$ only changes the
pair variables,
\[
 G=\psi_\lambda(f)
   =\Delta_{s,t}\psi_\lambda(f_0).
\]
Thus all residual $x$--$y$ factors also divide $G$.
The linear factors appearing in the three products are
pairwise nonassociate irreducibles in the full polynomial
ring in the block and residual variables. Unique
factorization combines their individual divisibilities,
with the cluster multiplicities, to give $D_\lambda\mid G$.
\end{proof}

\subsection{Symmetry, degree, and the Hilbert-series bound}

Put $\boldsymbol u^{(i)}=(u_1^{(i)},\ldots,u_{r_i}^{(i)})$
for $1\leq i\leq m$, and define
\begin{equation}\label{eq:block-symmetric-ring}
 \mathcal S_\lambda:=
 \C[\boldsymbol u^{(1)},\ldots,\boldsymbol u^{(m)},
       \boldsymbol x,\boldsymbol y,\boldsymbol z]
 ^{\left(\prod_{i=1}^m\mathfrak S_{r_i}\right)
    \times\mathfrak S_s\times\mathfrak S_t\times\mathfrak S_c}.
\end{equation}
This denotes separate symmetry within each block-size family
and within each color family of residual variables.

\begin{proposition}\label{prop:gordon-hilbert-bound}
The evaluation in \eqref{eq:gordon-image} gives a graded
injection
\begin{equation}\label{eq:divisible-image-injection}
 \mathcal Q_\lambda\lhook\joinrel\longrightarrow
 D_\lambda\mathcal S_\lambda.
\end{equation}
The polynomial $D_\lambda$ is homogeneous of degree
\begin{equation}\label{eq:degree}
 d_\lambda=
 \sum_{i,j=1}^m\min(i,j)r_ir_j-R+r_m(s+t+c)+st.
\end{equation}
Consequently, with respect to polynomial degree,
\begin{equation}\label{eq:gordon-hilbert-bound}
 H_{\mathcal Q_\lambda}(q)
 \leq
 \frac{q^{d_\lambda}}
 {\prod_{i=1}^m(q)_{r_i}(q)_s(q)_t(q)_c},
\end{equation}
where the inequality is coefficientwise.
\end{proposition}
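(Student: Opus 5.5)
The proof has three parts: establish the injection, compute $\deg D_\lambda$ from the exponents in \eqref{eq:cluster-divisor}--\eqref{eq:divisor}, and deduce the Hilbert-series bound.

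\emph{The injection.} By \eqref{eq:gordon-image}, $\overline\psi_\lambda$ is a graded injection of $\mathcal Q_\lambda$ into the polynomial ring in the block and residual variables. It preserves polynomial degree, since every block variable has degree one.

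Next we check that the image lies in $\mathcal S_\lambda$. Residual symmetry is inherited from $\mathcal A_R$, because $\psi_\lambda$ does not touch the residual variables. Symmetry within a block-size family follows from symmetry of $f$ in the pair variables: permuting two blocks of equal size $i$ amounts to a permutation of the underlying $\tau$'s, as already noted after \eqref{eq:block-evaluation}.

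Proposition~\ref{prop:gordon-divisor} gives $\psi_\lambda(f)=D_\lambda g$ for some polynomial $g$. It remains to show $g\in\mathcal S_\lambda$. The divisor $D_\lambda$ is invariant up to sign under the group in \eqref{eq:block-symmetric-ring}. The factors $(u^{(i)}_\alpha-u^{(i)}_\beta)^{2i}$ carry even exponents, so they are genuinely invariant. The cross-family products, $D_{m,\mathrm{res},\lambda}$, and $\Delta_{s,t}$ are products over full families, hence invariant. So $D_\lambda$ is actually invariant, and since the ring is a domain, $g=\psi_\lambda(f)/D_\lambda$ is invariant too. This gives \eqref{eq:divisible-image-injection}.

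\emph{The degree.} Add up the exponents.
\begin{itemize}
\item Same-size pairs contribute $\sum_i 2i\binom{r_i}{2}=\sum_i i r_i^2-\sum_i i r_i$.
\item Pairs of different sizes $i<j$ contribute $\sum_{i<j}2i r_i r_j$.
\end{itemize}
Together these equal $\sum_{i,j}\min(i,j)r_ir_j-\sum_i i r_i=\sum_{i,j}\min(i,j)r_ir_j-R$. The residual factors add $r_m(s+t+c)$, and $\Delta_{s,t}$ adds $st$. This is exactly \eqref{eq:degree}.

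\emph{The Hilbert series.} Multiplication by $D_\lambda$ is injective and shifts degree by $d_\lambda$. Hence $H_{D_\lambda\mathcal S_\lambda}=q^{d_\lambda}H_{\mathcal S_\lambda}$. The ring $\mathcal S_\lambda$ is a tensor product of rings of symmetric polynomials in $r_1,\dots,r_m,s,t,c$ degree-one variables. Each such factor in $n$ variables has Hilbert series $1/(q)_n$. Graded subspaces have coefficientwise smaller Hilbert series, which yields \eqref{eq:gordon-hilbert-bound}.

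The only delicate point is the sign issue in the symmetry of $D_\lambda$. It is resolved by the even exponents $2i$ on the same-size factors.
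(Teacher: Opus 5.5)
Your proposal is correct and follows the paper's proof essentially step for step. Both arguments use the same ingredients: symmetry of $\psi_\lambda(f)$ within block-size and residual families, invariance of $D_\lambda$ via the even exponents $2i$, the integral-domain argument showing the quotient $\psi_\lambda(f)/D_\lambda$ lies in $\mathcal S_\lambda$, the same exponent count for $d_\lambda$, and the Hilbert series $1/\bigl(\prod_i(q)_{r_i}(q)_s(q)_t(q)_c\bigr)$ of the separately symmetric ring shifted by $q^{d_\lambda}$.
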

\begin{proof}
First, $G=\psi_\lambda(f)$ is symmetric within each
block-size family: a permutation of equal-size blocks is
induced by a permutation of the original pair variables.
The symmetries among the residual variables are preserved by the evaluation.
Thus $G\in\mathcal S_\lambda$.

The divisor $D_\lambda$ has these same symmetries.
For blocks of size $i$, its same-size factor is the
$2i$-th power of a Vandermonde product, which is symmetric
because $2i$ is even. Permuting variables within any family
permutes the factors in the remaining products.
By Proposition~\ref{prop:gordon-divisor}, write $G=D_\lambda h$
in the full polynomial ring. For any permutation $\sigma$
in the group defining \eqref{eq:block-symmetric-ring},
\[
 D_\lambda\sigma(h)
 =\sigma(D_\lambda h)=\sigma(G)=G=D_\lambda h.
\]
The polynomial ring is an integral domain and $D_\lambda\ne0$,
so $\sigma(h)=h$. Hence $h\in\mathcal S_\lambda$.
Together with \eqref{eq:gordon-image}, this proves the
injection \eqref{eq:divisible-image-injection}.

To compute the degree, the same-size factors contribute
$\sum_i2i\binom{r_i}{2}$, and the different-size factors
contribute $\sum_{i<j}2i r_ir_j$. Therefore
\begin{align}\label{eq:cluster-degree-computation}
 \deg D_{\mathrm{cl},\lambda}
 &=\sum_{i=1}^m2i\binom{r_i}{2}
    +2\sum_{1\leq i<j\leq m}i r_ir_j\nonumber\\
 &=\sum_{i=1}^m i r_i^2
    +2\sum_{1\leq i<j\leq m}i r_ir_j
    -\sum_{i=1}^m i r_i\nonumber\\
 &=\sum_{i,j=1}^m\min(i,j)r_ir_j-R.
\end{align}
Each of the $r_m$ blocks of size $m$ contributes one
linear factor for each of the $s+t+c$ residual variables,
and $\Delta_{s,t}$ has $st$ linear factors. Adding these
degrees proves \eqref{eq:degree}.

For a family of $r$ variables of degree one, the fundamental
theorem of symmetric polynomials gives
\[
 \C[v_1,\ldots,v_r]^{\mathfrak S_r}
 =\C[e_1(\boldsymbol v),\ldots,e_r(\boldsymbol v)],
 \qquad \deg e_j(\boldsymbol v)=j,
\]
where the elementary symmetric polynomials are algebraically
independent. Its Hilbert series is consequently
\[
 \prod_{j=1}^r(1+q^j+q^{2j}+\cdots)
 =\frac{1}{\prod_{j=1}^r(1-q^j)}
 =\frac1{(q)_r}.
\]
The separately symmetric ring \eqref{eq:block-symmetric-ring}
is the tensor product of these rings for its disjoint
variable families. Hence
\begin{equation}\label{eq:block-symmetric-hilbert}
 H_{\mathcal S_\lambda}(q)
 =\frac1{\prod_{i=1}^m(q)_{r_i}(q)_s(q)_t(q)_c}.
\end{equation}
The factor for blocks of size $i$ is $1/(q)_{r_i}$ because
their variables have degree one; the block size $i$ does
not change that grading. An empty family contributes the
ring $\C$ and the Hilbert-series factor $1/(q)_0=1$.

Multiplication by the nonzero homogeneous polynomial
$D_\lambda$ shifts polynomial degree by $d_\lambda$.
More explicitly,
\[
 (D_\lambda\mathcal S_\lambda)[n]
 =D_\lambda\mathcal S_\lambda[n-d_\lambda],
\]
where a negative-degree component is zero. Taking degree
$n$ in \eqref{eq:divisible-image-injection} therefore gives
\begin{equation}\label{eq:gordon-degree-bound}
 \dim\mathcal Q_\lambda[n]
 \leq\dim\mathcal S_\lambda[n-d_\lambda]
 \qquad(n\geq0).
\end{equation}
Summing over $n$ and using
\eqref{eq:block-symmetric-hilbert} proves
\eqref{eq:gordon-hilbert-bound}.
\end{proof}

No claim that the image equals
$D_\lambda\mathcal S_\lambda$ is needed; further restrictions
on the image can only improve the coefficientwise upper bound.
Finally, the finite filtration \eqref{eq:gordon-filtration}
and the vanishing of the quotients with a part larger than
$m$ give
\begin{equation}\label{eq:gordon-summed-bound}
 H_U(q)
 =\sum_{\substack{\lambda\vdash R\\\text{all parts}\leq m}}
   H_{\mathcal Q_\lambda}(q)
 \leq
 \sum_{\substack{r_1,\ldots,r_m\geq0\\\sum_i i r_i=R}}
 \frac{q^{d_\lambda}}
 {\prod_{i=1}^m(q)_{r_i}(q)_s(q)_t(q)_c}.
\end{equation}
For $R=0$, the sum has the single term corresponding to
$r_1=\cdots=r_m=0$ and $D_\lambda=\Delta_{s,t}$.
In the next section we apply \eqref{eq:gordon-summed-bound}
to $U=\mathcal U_R$ and restore the conformal-weight shift
$a+b+c=2R+s+t+c$.

\section{The Hilbert-series bound}
\label{sec:hilbert-series-bound}

\subsection{Jet degree and conformal weight}

We first distinguish the grading used in the dual polynomial
spaces from the grading defining the character. The monomial
\[
 e_{i_1}\cdots e_{i_a}
 f_{j_1}\cdots f_{j_b}
 h_{k_1}\cdots h_{k_c}
\]
has jet degree
\[
 D=\sum_{\alpha=1}^a i_\alpha
   +\sum_{\beta=1}^b j_\beta
   +\sum_{\gamma=1}^c k_\gamma.
\]
Since each generator with index $i$ has conformal weight $i+1$,
its conformal weight is $D+a+b+c$. Under the duality of
Section~\ref{sec:dual-first-filtration}, jet degree corresponds
to ordinary polynomial degree. Thus define
\begin{equation}\label{eq:fixed-multidegree-hilbert}
 H^{\mathrm{jet}}_{a,b,c}(q)
 :=\sum_{D\geq0}\dim C_{m;a,b,c}[D]q^D
 =\sum_{D\geq0}\dim W_{a,b,c}[D]q^D.
\end{equation}
The conformal Hilbert series is then
\begin{equation}\label{eq:conformal-jet-shift}
 H_{C_m}(q)
 =\sum_{a,b,c\geq0}q^{a+b+c}
   H^{\mathrm{jet}}_{a,b,c}(q).
\end{equation}
In what follows, $H^{\mathrm{jet}}_E(q)$ denotes the Hilbert
series of a graded polynomial space or its subquotient $E$
with respect to ordinary polynomial degree.

For fixed $(a,b,c)$, put $M=\min(a,b)$. By
\eqref{eq:first-filtration-dimensions},
\begin{equation}\label{eq:first-hilbert-additivity}
 H^{\mathrm{jet}}_{a,b,c}(q)
 =\sum_{R=0}^M H^{\mathrm{jet}}_{\mathcal U_R}(q).
\end{equation}
This is an equality because the first filtration is finite
and graded, and dimensions add in each homogeneous component.
For each $R$, set
\[
 s=a-R,\qquad t=b-R.
\]
Each of the $R$ pair variables replaces one original
$x$-variable and one original $y$-variable. Consequently,
the conformal shift for every quotient of this fixed
multidegree remains
\begin{equation}\label{eq:pair-conformal-shift}
 a+b+c=2R+s+t+c.
\end{equation}
The specializations preserve polynomial degree; they do not
replace this shift by the number of variables after
specialization.

\subsection{Hilbert-series bound of $C_m$}

Apply the Gordon filtration to $U=\mathcal U_R$, and denote
its quotient \eqref{eq:gordon-quotient} by
$\mathcal Q_{R,\lambda}$ to indicate the dependence on $R$.
The dependence on the fixed multidegree $(a,b,c)$ is suppressed.
The finite filtration and Proposition~\ref{prop:gordon-hilbert-bound}
give
\begin{align}\label{eq:paired-hilbert-bound}
 H^{\mathrm{jet}}_{\mathcal U_R}(q)
 &=\sum_{\substack{\lambda\vdash R\\\text{all parts}\leq m}}
   H^{\mathrm{jet}}_{\mathcal Q_{R,\lambda}}(q)\nonumber\\
 &\leq
 \sum_{\substack{r_1,\ldots,r_m\geq0\\\sum_i i r_i=R}}
 \frac{q^{d_\lambda}}
 {\prod_{i=1}^m(q)_{r_i}(q)_s(q)_t(q)_c},
\end{align}
where $\lambda=(m^{r_m},\ldots,1^{r_1})$ and
\[
 d_\lambda
 =\sum_{i,j=1}^m\min(i,j)r_ir_j
  -R+r_m(s+t+c)+st.
\]
In particular, combining the two filtrations yields the
fixed-multidegree bound
\begin{equation}\label{eq:fixed-multidegree-bound}
 H^{\mathrm{jet}}_{a,b,c}(q)
 \leq
 \sum_{R=0}^{\min(a,b)}
 \ \sum_{\substack{r_1,\ldots,r_m\geq0\\\sum_i i r_i=R}}
 \frac{q^{d_\lambda}}
 {\prod_{i=1}^m(q)_{r_i}(q)_{a-R}(q)_{b-R}(q)_c}.
\end{equation}
After restoring the conformal shift in
\eqref{eq:pair-conformal-shift}, the exponent attached to
this quotient is
\begin{align}\label{eq:conformal-exponent-computation}
 2R+s+t+c+d_\lambda
 &=2R+s+t+c+\sum_{i,j=1}^m\min(i,j)r_ir_j
   -R+r_m(s+t+c)+st\nonumber\\
 &=\sum_{i,j=1}^m\min(i,j)r_ir_j
   +R+(r_m+1)(s+t+c)+st\nonumber\\
 &=\sum_{i,j=1}^m\min(i,j)r_ir_j
   +\sum_{i=1}^m i r_i+(r_m+1)(s+t+c)+st.
\end{align}

To sum over all multidegrees, we use the bijection between the
index sets
\[
 \begin{gathered}
 a,b,c\geq0,\quad 0\leq R\leq\min(a,b),\quad
 \lambda\vdash R,\quad\text{all parts of $\lambda$}\leq m,
 \\
 \longleftrightarrow
 \\
 r_1,\ldots,r_m,s,t,c\geq0.
 \end{gathered}
\]
In the forward direction, $r_i$ is the multiplicity of the
part $i$ in $\lambda$, and $s=a-R$, $t=b-R$. Conversely,
a tuple in the second index set determines
\[
 R=\sum_{i=1}^m i r_i,\qquad
 \lambda=(m^{r_m},\ldots,1^{r_1}),\qquad
 a=R+s,\qquad b=R+t.
\]
The inequality $R\leq\min(a,b)$ is then automatic.
Consequently, \eqref{eq:conformal-jet-shift},
\eqref{eq:fixed-multidegree-bound}, and
\eqref{eq:conformal-exponent-computation} give
\begin{equation}\label{eq:Sm}
 H_{C_m}(q)\leq\mathcal S_m(q):=
 \sum_{r_1,\ldots,r_m,s,t,c\geq0}
 \frac{
 q^{\sum_{i,j=1}^m\min(i,j)r_ir_j+
       \sum_{i=1}^m i r_i+(r_m+1)(s+t+c)+st}}
 {\prod_{i=1}^m(q)_{r_i}(q)_s(q)_t(q)_c}.
\end{equation}

These sums are well defined in $\C[[q]]$. Indeed,
$d_\lambda$ is the degree of the polynomial $D_\lambda$,
so it is nonnegative. The numerator exponent in
\eqref{eq:Sm} therefore satisfies
\[
 2R+s+t+c+d_\lambda\geq2R+s+t+c.
\]
Every reciprocal factor $1/(q)_j$ has an expansion in
nonnegative powers of $q$. Hence a contribution to the
coefficient of $q^N$ requires $2R+s+t+c\leq N$.
There are only finitely many possible $R,s,t,c$, and,
for each $R$, only finitely many tuples satisfying
$\sum_i i r_i=R$. Thus each coefficient involves finitely
many summands, which also justifies the preceding change
of indices.

\subsection{Cumulative multiplicities}

Introduce
\begin{equation}\label{eq:cumulative-multiplicities}
 n_j:=r_j+r_{j+1}+\cdots+r_m\quad(1\leq j\leq m),
 \qquad n_{m+1}:=0.
\end{equation}
Then $r_j=n_j-n_{j+1}$, so this is a bijection between
nonnegative multiplicity tuples and chains
\[
 n_1\geq n_2\geq\cdots\geq n_m\geq0.
\]
For the quadratic term, the identity
\[
 \min(i,j)
 =\sum_{\ell=1}^m
   \mathbf1_{\ell\leq i}\mathbf1_{\ell\leq j}
\]
gives
\begin{align}\label{eq:cumulative-quadratic-identity}
 \sum_{i,j=1}^m\min(i,j)r_ir_j
 &=\sum_{\ell=1}^m
   \left(\sum_{i=\ell}^m r_i\right)
   \left(\sum_{j=\ell}^m r_j\right)\nonumber\\
 &=\sum_{\ell=1}^m n_\ell^2.
\end{align}
Similarly,
\begin{equation}\label{eq:cumulative-linear-identity}
 \sum_{j=1}^m n_j
 =\sum_{j=1}^m\sum_{i=j}^m r_i
 =\sum_{i=1}^m i r_i=R,
 \qquad r_m=n_m.
\end{equation}
The denominator transforms according to
\[
 \prod_{i=1}^m(q)_{r_i}
 =\left(\prod_{j=1}^{m-1}(q)_{n_j-n_{j+1}}\right)(q)_{n_m}.
\]
Substituting these identities into \eqref{eq:Sm} gives
\begin{equation}\label{eq:chainpositive}
 \mathcal S_m(q)=
 \sum_{\substack{n_1\geq\cdots\geq n_m\geq0\\s,t,c\geq0}}
 \frac{q^{\sum_{j=1}^m n_j(n_j+1)+(n_m+1)(s+t+c)+st}}
 {\left(\prod_{j=1}^{m-1}(q)_{n_j-n_{j+1}}\right)
  (q)_{n_m}(q)_s(q)_t(q)_c}.
\end{equation}

\subsection{Summing the residual variables}

We regard $q$ as a formal variable. For $N\in\N$, the finite
$q$-Pochhammer symbol is
\[
 (a;q)_N:=\prod_{j=0}^{N-1}(1-aq^j),\qquad (a;q)_0:=1,
\]
and we write
\[
 (a;q)_\infty:=\prod_{j=0}^{\infty}(1-aq^j),\qquad
 (q)_N=\prod_{j=1}^{N}(1-q^j)=(q;q)_N,\qquad
 (q)_\infty=(q;q)_\infty.
\]
The infinite products are interpreted in the corresponding formal
power-series rings. In particular, $(q)_N$ and $(q)_\infty$ have
constant term $1$ and are invertible in $\C[[q]]$. For every
integer $A\geq1$,
\begin{equation}\label{eq:pochhammer-tail}
 (q^A;q)_\infty=\frac{(q)_\infty}{(q)_{A-1}}.
\end{equation}
Limits in this section are taken coefficientwise: for every fixed $D$, the
coefficient of $q^D$ is eventually constant and equal to the
coefficient of the asserted limit. Equivalently, each truncation
modulo $q^{D+1}$ eventually agrees. An infinite sum is rearranged
only when finitely many summands contribute to each coefficient.

We shall use Euler's identity and the $q$-binomial theorem:
\begin{equation}\label{eq:euler-qbinomial}
 \sum_{j\geq0}\frac{z^j}{(q)_j}
 =\frac1{(z;q)_\infty},\qquad
 \sum_{j\geq0}\frac{(a;q)_j}{(q)_j}z^j
 =\frac{(az;q)_\infty}{(z;q)_\infty}.
\end{equation}
These identities hold as formal series in $q$ and $z$, with
polynomial coefficients in $a$. For completeness, if
$B(z)=\sum_{j\geq0}(a;q)_jz^j/(q)_j$, then
\[
 (1-z)B(z)=(1-az)B(qz),\qquad B(0)=1,
\]
because the coefficients $b_j=(a;q)_j/(q)_j$ satisfy
\[
 (1-q^j)b_j=(1-aq^{j-1})b_{j-1}\qquad(j\geq1).
\]
The quotient of infinite products in the second identity of
\eqref{eq:euler-qbinomial} satisfies the same functional equation
and constant-term condition. The recurrence uniquely determines
its coefficients, proving the identity. Taking $a=0$ gives the
first identity. In applying \eqref{eq:euler-qbinomial} below, we set
$z=q^A$ with $A\geq1$, so the substitutions define elements of
$\C[[q]]$.

Fix $n\geq0$, and consider
\[
 \mathcal R_n(q):=
 \sum_{s,t,c\geq0}
 \frac{q^{(n+1)(s+t+c)+st}}
 {(q)_n(q)_s(q)_t(q)_c}.
\]
For fixed $s,t$, Euler's identity gives
\[
 \sum_{c\geq0}\frac{q^{(n+1)c}}{(q)_c}
 =\frac1{(q^{n+1};q)_\infty}
 =\frac{(q)_n}{(q)_\infty}.
\]
This factor cancels the denominator $(q)_n$ in $\mathcal R_n(q)$.
Consequently,
\begin{align*}
 \mathcal R_n(q)
 &=\frac1{(q)_\infty}
   \sum_{s,t\geq0}
   \frac{q^{(n+1)(s+t)+st}}{(q)_s(q)_t}\\
 &=\frac1{(q)_\infty}
   \sum_{s\geq0}\frac{q^{(n+1)s}}{(q)_s}
   \left(\sum_{t\geq0}
         \frac{q^{(n+s+1)t}}{(q)_t}\right).
\end{align*}
For each fixed $s$, the remaining inner sum is
\[
 \sum_{t\geq0}\frac{q^{(n+s+1)t}}{(q)_t}
 =\frac1{(q^{n+s+1};q)_\infty}
 =\frac{(q)_{n+s}}{(q)_\infty}.
\]
Using
\[
 (q)_{n+s}=(q)_n(q^{n+1};q)_s
\]
and the second identity of \eqref{eq:euler-qbinomial}, with
$a=z=q^{n+1}$, we obtain
\begin{align*}
 \sum_{s\geq0}\frac{(q)_{n+s}}{(q)_s}q^{(n+1)s}
 &=(q)_n\sum_{s\geq0}
   \frac{(q^{n+1};q)_s}{(q)_s}(q^{n+1})^s\\
 &=(q)_n\frac{(q^{2n+2};q)_\infty}
                   {(q^{n+1};q)_\infty}\\
 &=(q)_n
   \frac{(q)_\infty/(q)_{2n+1}}
        {(q)_\infty/(q)_n}
  =\frac{(q)_n^2}{(q)_{2n+1}}.
\end{align*}
Thus
\begin{align}\label{eq:residualsum}
 &\sum_{s,t,c\geq0}
 \frac{q^{(n+1)(s+t+c)+st}}
 {(q)_n(q)_s(q)_t(q)_c}\nonumber\\
 &\qquad=
 \frac1{(q)_\infty^2}
 \sum_{s\geq0}\frac{(q)_{n+s}}{(q)_s}q^{(n+1)s}
 =\frac{(q)_n^2}{(q)_\infty^2(q)_{2n+1}}.
\end{align}
These computations include $n=0$: the middle sum is then
$\sum_{s\geq0}q^s=1/(1-q)$, in agreement with the last expression.
The exponent $(n+1)(s+t+c)+st$ is at least $s+t+c$, so every
coefficient receives contributions from only finitely many triples.
This justifies the successive summations formally.

Applying \eqref{eq:residualsum} with $n=n_m$ to
\eqref{eq:chainpositive} gives
\begin{equation}\label{eq:chainsum}
 \begin{split}
 \mathcal S_m(q)&=\frac{\mathcal T_m(q)}{(q)_\infty^2},\\
 \mathcal T_m(q)&:=
 \sum_{n_1\geq\cdots\geq n_m\geq0}
 \frac{q^{\sum_{j=1}^m n_j(n_j+1)}(q)_{n_m}^2}
 {\left(\prod_{j=1}^{m-1}(q)_{n_j-n_{j+1}}\right)
  (q)_{2n_m+1}}.
 \end{split}
\end{equation}
The product over $1\leq j\leq m-1$ is $1$ when $m=1$.

\subsection{Some Combinatorial identities}

For $N\in\N$ and $j\in\mathbb Z$, define the Gaussian
$q$-binomial coefficient by
\begin{equation}\label{eq:gaussian-definition}
 \qbinom Nj:=
 \begin{cases}
 \displaystyle\frac{(q)_N}{(q)_j(q)_{N-j}},&0\leq j\leq N,\\[6pt]
 0,&j<0\text{ or }j>N.
 \end{cases}
\end{equation}
In particular, $\qbinom N0=\qbinom NN=1$ and
$\qbinom Nj=\qbinom N{N-j}$. The finite $q$-binomial theorem is
\begin{equation}\label{eq:finite-qbinomial}
 (z;q)_N=
 \sum_{j=0}^{N}(-1)^j q^{j(j-1)/2}\qbinom Nj z^j.
\end{equation}

\begin{lemma}\label{lem:finite}
For every $n\geq0$,
\begin{equation}\label{eq:finite}
 (q)_n^2=
 \sum_{r=0}^n(-1)^r(2r+1)q^{r(r+1)/2}
 \qbinom{2n+1}{n-r}.
\end{equation}
Consequently,
\begin{equation}\label{eq:Jacobi}
 \sum_{r\geq0}(-1)^r(2r+1)q^{r(r+1)/2}=(q)_\infty^3.
\end{equation}
\end{lemma}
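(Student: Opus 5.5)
Both statements should follow from the finite $q$-binomial theorem \eqref{eq:finite-qbinomial} together with a limit $n\to\infty$. The plan is to prove \eqref{eq:finite} by writing $(q)_n^2$ as a symmetrized product and extracting a constant term, and then to obtain \eqref{eq:Jacobi} by letting $n\to\infty$ coefficientwise.

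\emph{Step 1: a constant-term formula.} Apply \eqref{eq:finite-qbinomial} with $N=2n+1$ and $z$ replaced by $zq^{-n}$. Reindexing $j=n+1+k$ and completing the square should give a Jacobi-triple-product-type finite identity:
\[
 (zq^{-n};q)_{2n+1}=\sum_{k=-n-1}^{n}(-1)^{n+1+k}q^{\binom{n+1+k}{2}}q^{-n(n+1+k)}\qbinom{2n+1}{n+1+k}z^{n+1+k}.
\]
On the product side we have
\[
 (zq^{-n};q)_{2n+1}=(1-z)\prod_{j=1}^n(1-zq^j)(1-zq^{-j}).
\]
Pulling out $q^{-\binom{n+1}{2}}(-z)^n$ turns this into $(1-z)\prod_{j=1}^{n}(1-zq^{j})(1-z^{-1}q^{j})$. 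The standard consequence is
\[
 \prod_{j=1}^{n}(1-zq^{j})(1-z^{-1}q^{j})(1-z)=\sum_{k}(-1)^k z^{k}q^{k(k+1)/2}\qbinom{2n+1}{n+k+1}\cdot(\text{sign normalization}).
\]
I will verify the exact exponents by checking the cases $n=0,1$.

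\emph{Step 2: extract $(q)_n^2$.} To obtain $(q)_n^2$, I would divide by $(1-z)$ and set $z=1$. Equivalently, differentiate in $z$ at $z=1$, since $\frac{d}{dz}\big[(1-z)G(z)\big]_{z=1}=-G(1)$ and $G(1)=(q)_n^2$. Differentiating the right side brings down the factor $k$. I then pair the terms $k$ and $-k-1$, which carry the same power $q^{k(k+1)/2}$ and satisfy $\qbinom{2n+1}{n+k+1}=\qbinom{2n+1}{n-k}$. The factors $k$ and $-(k+1)$ combine, with the signs, into $(2r+1)$ for $r=k\ge0$. This should yield exactly \eqref{eq:finite}.

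\textbf{The main obstacle is the sign and exponent bookkeeping in Steps 1 and 2.} I would fix it by comparing with the case $n=1$, where the right side is $\qbinom31-3q=1+q+q^2-3q$ and the left side is $(1-q)^2=1-2q+q^2$.

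\emph{Step 3: the limit.} For fixed $r$, the coefficient $\qbinom{2n+1}{n-r}=(q)_{2n+1}/\bigl((q)_{n-r}(q)_{n+r+1}\bigr)$ tends to $1/(q)_\infty$ coefficientwise. Meanwhile $(q)_n^2\to(q)_\infty^2$. Every term with $r(r+1)/2>D$ does not affect the coefficient of $q^D$, so only finitely many terms contribute in each degree and the limit can be taken term by term. This gives $(q)_\infty^2=(q)_\infty^{-1}\sum_{r\ge0}(-1)^r(2r+1)q^{r(r+1)/2}$, which is \eqref{eq:Jacobi}.
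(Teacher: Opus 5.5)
Your proposal is correct and is essentially the paper's argument. The paper applies $z\,d/dz$ to the $q$-binomial expansion of $(z;q)_{2n+1}$ at $z=q^{-n}$, which is your $z=1$ after the rescaling $z\mapsto zq^{-n}$; it then pairs $j=n-r$ with $j=n+r+1$ via symmetry of the Gaussian coefficients and takes the same coefficientwise limit. One bookkeeping correction: after dividing by $(-z)^nq^{-\binom{n+1}{2}}$, the $k$-th term carries $z^{k+1}$ and sign $(-1)^{k+1}$, not $z^{k}$. This is harmless, because the left side vanishes at $z=1$, so the extra factor of $z$ does not change the derivative there, and each pair still contributes $(-1)^r(2r+1)$.
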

\begin{proof}
Set $P_n(z)=(z;q)_{2n+1}=\prod_{j=0}^{2n}(1-zq^j)$.
Apply $z\,d/dz$ to \eqref{eq:finite-qbinomial} with $N=2n+1$,
and then set $z=q^{-n}$. On the product side,
\[
 zP_n'(z)=
 \sum_{j=0}^{2n}(-zq^j)
 \prod_{\substack{0\leq\ell\leq2n\\\ell\neq j}}
 (1-zq^\ell).
\]
At $z=q^{-n}$, every term except $j=n$ contains the factor
$1-q^{-n}q^n=0$. Therefore
\begin{align}\label{eq:derivative-product-evaluation}
 \left.zP_n'(z)\right|_{z=q^{-n}}
 &=-\prod_{\substack{0\leq\ell\leq2n\\\ell\neq n}}
       (1-q^{\ell-n})\nonumber\\
 &=-\prod_{a=1}^{n}(1-q^{-a})(1-q^a)\nonumber\\
 &=(-1)^{n+1}q^{-n(n+1)/2}(q)_n^2.
\end{align}
In the last equality we used $1-q^{-a}=-q^{-a}(1-q^a)$.

Differentiating the sum side gives
\begin{equation}\label{eq:derivative-sum-evaluation}
 \left.zP_n'(z)\right|_{z=q^{-n}}
 =\sum_{j=0}^{2n+1}
 j(-1)^j q^{j(j-1)/2-nj}\qbinom{2n+1}j.
\end{equation}
Pair the indices
\[
 j_-=n-r,\qquad j_+=n+r+1\qquad(0\leq r\leq n).
\]
These pairs partition the indices $0,\ldots,2n+1$. Since
$j_-+j_+=2n+1$, symmetry of the Gaussian coefficients gives
\[
 \qbinom{2n+1}{j_-}=
 \qbinom{2n+1}{j_+}=
 \qbinom{2n+1}{n-r}.
\]
The two powers of $q$ agree, because
\begin{equation}\label{eq:paired-exponents}
 \frac{j_\pm(j_\pm-1)}2-nj_\pm
 =-\frac{n(n+1)}2+\frac{r(r+1)}2.
\end{equation}
Moreover,
\begin{align*}
 j_-(-1)^{j_-}+j_+(-1)^{j_+}
 &=(-1)^{n-r}\bigl((n-r)-(n+r+1)\bigr)\\
 &=(-1)^{n+r+1}(2r+1).
\end{align*}
Thus the contribution of a pair to
\eqref{eq:derivative-sum-evaluation} equals
\[
 (-1)^{n+1}q^{-n(n+1)/2}
 (-1)^r(2r+1)q^{r(r+1)/2}
 \qbinom{2n+1}{n-r}.
\]
Comparing with \eqref{eq:derivative-product-evaluation} and
cancelling $(-1)^{n+1}q^{-n(n+1)/2}$ proves
\eqref{eq:finite}. The substitution $z=q^{-n}$ was made in
$\C(q)$; after this cancellation, both sides of
\eqref{eq:finite} belong to $\C[[q]]$.

For each fixed $r$, as $n\to\infty$ with $n\geq r$,
\begin{equation}\label{eq:gaussian-central-limit}
 \qbinom{2n+1}{n-r}
 =\frac{(q)_{2n+1}}{(q)_{n-r}(q)_{n+r+1}}
 \longrightarrow\frac1{(q)_\infty}
\end{equation}
coefficientwise. Indeed, for every fixed $D$, each finite product
in the quotient agrees with $(q)_\infty$ modulo $q^{D+1}$ once
its index is at least $D$, and inversion preserves this agreement.
In the coefficient of $q^D$ on the right-hand side of
\eqref{eq:finite}, only indices satisfying $r(r+1)/2\leq D$
can occur. This is a finite set independent of sufficiently large
$n$, since the Gaussian coefficients have no negative powers of
$q$. We may therefore pass to the coefficientwise limit to obtain
\[
 (q)_\infty^2=
 \frac1{(q)_\infty}
 \sum_{r\geq0}(-1)^r(2r+1)q^{r(r+1)/2}.
\]
Multiplication by $(q)_\infty$ proves \eqref{eq:Jacobi}.
\end{proof}

\begin{lemma}\label{lem:kernel}
For integers $N,d\geq0$,
\begin{equation}\label{eq:kernelbasic}
 \sum_{j=0}^N
 \frac{q^{j(j+d)}}{(q)_{N-j}(q)_j(q)_{j+d}}
 =\frac1{(q)_N(q)_{N+d}}.
\end{equation}
In particular, for $n\geq r\geq0$,
\begin{equation}\label{eq:kernel}
 \sum_{j=r}^n
 \frac{q^{j(j+1)}}{(q)_{n-j}(q)_{j-r}(q)_{j+r+1}}
 =\frac{q^{r(r+1)}}{(q)_{n-r}(q)_{n+r+1}},
\end{equation}
and, for $r\geq0$,
\begin{equation}\label{eq:infinitekernel}
 \sum_{j=r}^\infty
 \frac{q^{j(j+1)}}{(q)_{j-r}(q)_{j+r+1}}
 =\frac{q^{r(r+1)}}{(q)_\infty}.
\end{equation}
\end{lemma}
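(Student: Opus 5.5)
The plan is to prove \eqref{eq:kernelbasic} by a generating-function argument (a $q$-Chu--Vandermonde identity), and then obtain \eqref{eq:kernel} and \eqref{eq:infinitekernel} by reindexing and passing to a limit.

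\emph{Step 1: reduction.} Multiply \eqref{eq:kernelbasic} by $(q)_N(q)_{N+d}$. The claim becomes
\[
 \sum_{j=0}^N q^{j(j+d)}\qbinom Nj\qbinom{N+d}{N-j}=\qbinom{2N+d}{N}\cdot\frac{(q)_N(q)_{N+d}}{(q)_{2N+d}}\cdot\frac{(q)_{2N+d}}{(q)_N(q)_{N+d}}.
\]
This is not the right form, so I will instead aim directly for the identity
\[
 \sum_{j=0}^N q^{j(j+d)}\qbinom Nj\qbinom{N+d}{j+d}\frac{(q)_{N+d}}{(q)_{N+d}}\;\; ?
\]
The cleaner route is to observe that $\frac{(q)_N(q)_{N+d}}{(q)_{N-j}(q)_j(q)_{j+d}}=\qbinom Nj\frac{(q)_{N+d}}{(q)_{j+d}}$. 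The factor $(q)_{N+d}/(q)_{j+d}=(q^{j+d+1};q)_{N-j}$ is then expanded by \eqref{eq:finite-qbinomial}. This produces a double sum, which I expect to collapse after exchanging the order of summation.

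\emph{Step 2: alternative, preferred.} A more robust approach is to use Euler's identity \eqref{eq:euler-qbinomial} to recognize the left side as a coefficient extraction. Write $1/(q)_{N-j}$ and $1/(q)_j$ as coefficients of $z^{N-j}$ in $1/(z;q)_\infty$ and of $w^j$ in $1/(w;q)_\infty$, and write $q^{j(j+d)}/(q)_{j+d}$ via Durfee-rectangle reasoning.

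Concretely, I will use the combinatorial interpretation. $q^{j(j+d)}/((q)_j(q)_{j+d})$ counts partitions with Durfee $j\times(j+d)$ rectangle and unrestricted parts, and summing over $j$ gives $1/(q)_\infty$ restricted appropriately (the standard Durfee identity $\sum_j q^{j(j+d)}/((q)_j(q)_{j+d})=1/(q)_\infty$). The finite version with the extra $1/(q)_{N-j}$ is the standard finite identity of Andrews,
\[
 \sum_j q^{j(j+d)}\qbinom Nj\qbinom{N+d}{N-j}\;\text{-type},
\]
which I would simply verify by induction on $N$ using the Pascal recurrences $\qbinom{N}{j}=\qbinom{N-1}{j}+q^{N-j}\qbinom{N-1}{j-1}$. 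For the induction, define $L_{N,d}$ as the left side. I will show $L_{N,d}$ satisfies the same recurrence in $N$ as $1/((q)_N(q)_{N+d})$, namely $(1-q^N)(1-q^{N+d})X_N=X_{N-1}$. Splitting $1-q^N$ via $(1-q^{N-j})+q^{N-j}(1-q^j)$ and similarly for $1-q^{N+d}$ should express $(q)_N(q)_{N+d}$ times each summand as a combination of shifted summands, after telescoping.

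\emph{Step 3: the corollaries.} Given \eqref{eq:kernelbasic}, \eqref{eq:kernel} follows by shifting $j\mapsto j+r$ with $N=n-r$ and $d=2r+1$. The exponent is $(j+r)(j+r+1)=j(j+2r+1)+r(r+1)$, which matches. Then \eqref{eq:infinitekernel} follows by letting $n\to\infty$ coefficientwise: $1/(q)_{n-j}\to1/(q)_\infty$, and for each $q^D$ only the finitely many $j$ with $j(j+1)\le D$ contribute. This is justified exactly as in the limit argument of Lemma~\ref{lem:finite}.

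The main obstacle is the bookkeeping in the telescoping recurrence of Step 2. If that becomes messy, I would fall back on the known generating-function proof of $\sum_j q^{j(j+d)}/((q)_{N-j}(q)_j(q)_{j+d})$, interpreting it as the count of partitions fitting constraints via Durfee rectangles in an $N\times(N+d)$ frame.
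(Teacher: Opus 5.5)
Your Step 3 matches the paper: same reindexing $N=n-r$, $d=2r+1$, $j=r+\ell$, and a coefficientwise limit. You take the limit in \eqref{eq:kernel} and the paper takes it in $F_N(d)$; both are fine. For \eqref{eq:kernelbasic}, your committed route differs from the paper's, and as written its decisive step is only asserted (``should \ldots\ after telescoping'').

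The paper multiplies by $(q)_N$ alone and sets $F_N(d)=\sum_j\qbinom Nj q^{j(j+d)}/(q)_{j+d}$. Applying the Pascal recurrence to $\qbinom Nj$ gives $F_N(d)=F_{N-1}(d)+q^{N+d}F_{N-1}(d+1)$. It therefore inducts on $N$ simultaneously for all $d$, and finishes with $1/(q)_{N+d-1}+q^{N+d}/(q)_{N+d}=1/(q)_{N+d}$. In particular, Pascal with $d$ held fixed, as you phrase it, does not close, because it produces the sum at $d+1$.

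Your fixed-$d$ recurrence $(1-q^N)(1-q^{N+d})L_{N,d}=L_{N-1,d}$ is nonetheless true, and your splitting does yield it. Let $a_j$ be the $j$-th summand of \eqref{eq:kernelbasic}, zero outside $0\le j\le N$, so that $(1-q^{N-j})a_j$ is the $j$-th summand for $N-1$. Put $G_j:=-q^{N-j}(1-q^j)(1-q^{j+d})a_j$. Since $(1-q^{j+1})(1-q^{j+1+d})a_{j+1}=q^{2j+1+d}(1-q^{N-j})a_j$, we get $G_{j+1}=-q^{N+j+d}(1-q^{N-j})a_j$. A direct expansion then gives
\[
 \bigl[(1-q^N)(1-q^{N+d})-(1-q^{N-j})\bigr]a_j=G_{j+1}-G_j,
 \qquad G_0=G_{N+1}=0.
\]
Summing over $0\le j\le N$ gives the recurrence, and $L_{0,d}=1/(q)_d$ completes the induction. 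So yours is a creative-telescoping proof at fixed $d$, and it needs this certificate. The paper's strengthening of the induction hypothesis to all $d$ makes a certificate unnecessary and reduces the verification to one line.

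Two of the alternatives you list are off target. The sum $\sum_j q^{j(j+d)}\qbinom Nj\qbinom{N+d}{N-j}$ in Steps 1 and 2 is the $q$-Vandermonde sum, equal to $\qbinom{2N+d}{N}$ (partitions in an $N\times(N+d)$ box). It is not $(q)_N(q)_{N+d}$ times the left side of \eqref{eq:kernelbasic}, whose terms carry an extra factor $(q)_{N-j}$ and sum to $1$.

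The correct combinatorial fallback is the Durfee decomposition of partitions with largest part at most $N+d$. Each such partition splits into a $j\times(j+d)$ rectangle, a partition in a $j\times(N-j)$ box beside it, and parts at most $j+d$ below. This shows that $(q)_N$ times the left side equals $1/(q)_{N+d}$.
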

\begin{proof}
For $N\geq1$, the Gaussian coefficients satisfy
\begin{equation}\label{eq:gaussian-recurrence}
 \qbinom Nj=
 \qbinom{N-1}j+q^{N-j}\qbinom{N-1}{j-1}
 \qquad(0\leq j\leq N).
\end{equation}

Multiplying the left-hand side of \eqref{eq:kernelbasic} by
$(q)_N$ gives
\begin{equation}\label{eq:FN-definition}
 F_N(d):=\sum_{j=0}^N\qbinom Nj
                 \frac{q^{j(j+d)}}{(q)_{j+d}}.
\end{equation}
We prove simultaneously for all $d\geq0$, by induction on $N$,
that
\begin{equation}\label{eq:FN-evaluation}
 F_N(d)=\frac1{(q)_{N+d}}.
\end{equation}
For $N=0$, the only term of \eqref{eq:FN-definition} has $j=0$,
and $F_0(d)=1/(q)_d$ for every $d\geq0$.

Suppose $N\geq1$. Applying \eqref{eq:gaussian-recurrence} yields
\begin{align*}
 F_N(d)
 &=\sum_{j=0}^{N-1}\qbinom{N-1}j
          \frac{q^{j(j+d)}}{(q)_{j+d}}\\
 &\quad+\sum_{j=1}^{N}\qbinom{N-1}{j-1}
          \frac{q^{N-j+j(j+d)}}{(q)_{j+d}}.
\end{align*}
The first sum is $F_{N-1}(d)$. In the second sum, put
$j=\ell+1$. Then
\[
 \begin{aligned}
 N-j+j(j+d)
 &=N-\ell-1+(\ell+1)(\ell+d+1)\\
 &=N+d+\ell(\ell+d+1),
 \end{aligned}
\]
and $(q)_{j+d}=(q)_{\ell+(d+1)}$. Consequently,
\begin{align}\label{eq:FN-recurrence}
 F_N(d)
 &=F_{N-1}(d)
   +q^{N+d}\sum_{\ell=0}^{N-1}\qbinom{N-1}\ell
     \frac{q^{\ell(\ell+d+1)}}{(q)_{\ell+d+1}}\nonumber\\
 &=F_{N-1}(d)+q^{N+d}F_{N-1}(d+1).
\end{align}
The induction hypothesis applies at both $d$ and $d+1$, so
\begin{align*}
 F_N(d)
 &=\frac1{(q)_{N+d-1}}
   +\frac{q^{N+d}}{(q)_{N+d}}\\
 &=\frac{1-q^{N+d}+q^{N+d}}{(q)_{N+d}}
  =\frac1{(q)_{N+d}}.
\end{align*}
Here $N+d\geq1$, so the index $N+d-1$ is nonnegative.
This proves \eqref{eq:FN-evaluation}. Dividing by $(q)_N$
proves \eqref{eq:kernelbasic}.

To obtain \eqref{eq:kernel}, set
\[
 N=n-r,\qquad d=2r+1,\qquad j=r+\ell.
\]
The exponent and denominator indices become
\[
 \begin{aligned}
 j(j+1)&=r(r+1)+\ell(\ell+2r+1),\\
 n-j&=N-\ell,\qquad j-r=\ell,\qquad
 j+r+1=\ell+d.
 \end{aligned}
\]
Thus \eqref{eq:kernelbasic} gives
\begin{align*}
 &\sum_{j=r}^n
 \frac{q^{j(j+1)}}{(q)_{n-j}(q)_{j-r}(q)_{j+r+1}}\\
 &\qquad=
 q^{r(r+1)}\sum_{\ell=0}^{N}
 \frac{q^{\ell(\ell+d)}}{(q)_{N-\ell}(q)_\ell(q)_{\ell+d}}\\
 &\qquad=
 \frac{q^{r(r+1)}}{(q)_N(q)_{N+d}}
 =\frac{q^{r(r+1)}}{(q)_{n-r}(q)_{n+r+1}}.
\end{align*}

Finally, fix $d\geq0$ and let $N\to\infty$ in
\eqref{eq:FN-evaluation}. For every fixed $j$,
\[
 \qbinom Nj=
 \frac{1}{(q)_j}\prod_{a=N-j+1}^{N}(1-q^a)
 \longrightarrow\frac1{(q)_j}
\]
coefficientwise, where the product is empty if $j=0$.
At degree $D$, only $j$ with $j(j+d)\leq D$ contribute to
\eqref{eq:FN-definition}. Hence the limit can be taken termwise,
and it gives
\begin{equation}\label{eq:kernel-limit-rectangle}
 \sum_{j\geq0}\frac{q^{j(j+d)}}{(q)_j(q)_{j+d}}
 =\frac1{(q)_\infty}.
\end{equation}
Applying \eqref{eq:kernel-limit-rectangle} with $d=2r+1$
and shifting $j=r+\ell$ as above yields
\[
 \begin{aligned}
 \sum_{j=r}^\infty
 \frac{q^{j(j+1)}}{(q)_{j-r}(q)_{j+r+1}}
 &=q^{r(r+1)}
   \sum_{\ell\geq0}
   \frac{q^{\ell(\ell+2r+1)}}{(q)_\ell(q)_{\ell+2r+1}}\\
 &=\frac{q^{r(r+1)}}{(q)_\infty},
 \end{aligned}
\]
which is \eqref{eq:infinitekernel}.
\end{proof}

\subsection{The chain summation}
\label{subsec:chain-summation}
In this section, we will show that
\begin{equation}\label{eq:Sidentity}
 \mathcal S_m(q)=\frac{(q^{2m+1};q^{2m+1})_\infty^3}
 {(q)_\infty^3}.
\end{equation}

Set
\begin{equation}\label{eq:chain-alpha}
 \alpha_r:=(-1)^r(2r+1)q^{r(r+1)/2}
 \qquad(r\geq0).
\end{equation}
For integers $\ell\geq1$ and $r\geq0$, define
\begin{equation}\label{eq:chain-kernel-definition}
 G_\ell(r):=
 \sum_{n_1\geq\cdots\geq n_\ell\geq r}
 \frac{q^{\sum_{i=1}^\ell n_i(n_i+1)}}
 {\left(\prod_{i=1}^{\ell-1}(q)_{n_i-n_{i+1}}\right)
  (q)_{n_\ell-r}(q)_{n_\ell+r+1}}.
\end{equation}
For $\ell=1$, the product of consecutive-difference factors
is empty and is equal to $1$.

\begin{lemma}\label{lem:chain-summation}
For every $\ell\geq1$ and $r\geq0$,
\begin{equation}\label{eq:chain-kernel-evaluation}
 G_\ell(r)=\frac{q^{\ell r(r+1)}}{(q)_\infty}.
\end{equation}
Moreover,
\begin{equation}\label{eq:chain-evaluation}
 \mathcal T_m(q)
 =\sum_{r\geq0}\alpha_rG_m(r)
 =\frac{(q^{2m+1};q^{2m+1})_\infty^3}{(q)_\infty}.
\end{equation}
In particular, \eqref{eq:Sidentity} holds.
\end{lemma}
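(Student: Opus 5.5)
The proof of \eqref{eq:chain-kernel-evaluation} goes by induction on $\ell$, summing out the innermost index $n_\ell$ with the kernel identities of Lemma~\ref{lem:kernel}.

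\textbf{Base case.} For $\ell=1$ we have
\[
 G_1(r)=\sum_{n_1\geq r}\frac{q^{n_1(n_1+1)}}{(q)_{n_1-r}(q)_{n_1+r+1}}.
\]
This equals $q^{r(r+1)}/(q)_\infty$ directly by \eqref{eq:infinitekernel}.

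\textbf{Inductive step.} Let $\ell\geq2$. Fix the outer indices $n_1\geq\cdots\geq n_{\ell-1}$, put $n=n_{\ell-1}$, and sum over $r\leq n_\ell\leq n$. The inner sum is
\[
 \sum_{j=r}^{n}\frac{q^{j(j+1)}}{(q)_{n-j}(q)_{j-r}(q)_{j+r+1}}=\frac{q^{r(r+1)}}{(q)_{n-r}(q)_{n+r+1}}
\]
by \eqref{eq:kernel}. The range is empty when $n<r$, consistent with the convention that $1/(q)_{\text{negative}}=0$. Substituting back gives
\[
 G_\ell(r)=q^{r(r+1)}G_{\ell-1}(r),
\]
and the induction hypothesis yields \eqref{eq:chain-kernel-evaluation}. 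Rearranging the sum is legitimate because each coefficient of $q^D$ receives only finitely many terms: every exponent $\sum n_i(n_i+1)\geq\sum n_i$ bounds the tuple.

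\textbf{Expansion of $\mathcal T_m$.} In the definition \eqref{eq:chainsum}, rewrite the factor
\[
 \frac{(q)_{n_m}^2}{(q)_{2n_m+1}}=\sum_{r=0}^{n_m}\frac{\alpha_r}{(q)_{n_m-r}(q)_{n_m+r+1}},
\]
using Lemma~\ref{lem:finite}: divide \eqref{eq:finite} by $(q)_{2n+1}$ and write the Gaussian coefficient out as a ratio of Pochhammer symbols. Insert this into $\mathcal T_m$ and interchange the $r$-sum with the chain sum; the condition $r\leq n_m$ becomes the lower limit $n_m\geq r$ in $G_m(r)$. The interchange is justified coefficientwise, since $|\alpha_r|$-terms only shift degrees upward and finitely many $(r,\boldsymbol n)$ contribute to each degree. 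This gives the first equality in \eqref{eq:chain-evaluation}.

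\textbf{Theta-function evaluation.} By the first part,
\[
 \mathcal T_m=\frac{1}{(q)_\infty}\sum_{r\geq0}(-1)^r(2r+1)q^{(2m+1)r(r+1)/2},
\]
because
\[
 \frac{r(r+1)}2+m\,r(r+1)=(2m+1)\frac{r(r+1)}2.
\]
Apply Jacobi's identity \eqref{eq:Jacobi} with $q$ replaced by $q^{2m+1}$ (a valid substitution in $\C[[q]]$) to get $(q^{2m+1};q^{2m+1})_\infty^3/(q)_\infty$. Finally, \eqref{eq:Sidentity} follows from $\mathcal S_m=\mathcal T_m/(q)_\infty^2$ in \eqref{eq:chainsum}.

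The main obstacle is bookkeeping rather than anything conceptual: matching the index ranges in the inductive step exactly to the form of \eqref{eq:kernel}, in particular the role of $n_{\ell-1}$ as $n$ and the empty-range case $n<r$, and justifying the interchange of the $r$-sum with the chain sum as formal power series.
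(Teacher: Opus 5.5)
Your proposal is correct and follows the paper's proof essentially step for step. It uses the recursion $G_\ell(r)=q^{r(r+1)}G_{\ell-1}(r)$ obtained from \eqref{eq:kernel}, the base case from \eqref{eq:infinitekernel}, the expansion of $(q)_{n_m}^2/(q)_{2n_m+1}$ via \eqref{eq:finite} to get $\mathcal T_m=\sum_r\alpha_rG_m(r)$, and finally \eqref{eq:Jacobi} at $q^{2m+1}$. The only difference is the order of presentation (you evaluate $G_\ell$ before expanding $\mathcal T_m$), and your coefficientwise justifications for the rearrangements are sound.
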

\begin{proof}
By the definition of the Gaussian coefficient,
\[
 \frac{1}{(q)_{2n+1}}\qbinom{2n+1}{n-r}
 =\frac1{(q)_{n-r}(q)_{n+r+1}}
 \qquad(0\leq r\leq n).
\]
Dividing \eqref{eq:finite} by $(q)_{2n+1}$ therefore gives
\begin{equation}\label{eq:finite-ratio-expansion}
 \frac{(q)_n^2}{(q)_{2n+1}}
 =\sum_{r=0}^n
 \frac{\alpha_r}{(q)_{n-r}(q)_{n+r+1}}.
\end{equation}
Substitute this identity with $n=n_m$ into
\eqref{eq:chainsum}. The resulting expression is
\begin{align}\label{eq:chain-reindexing}
 \mathcal T_m(q)
 &=\sum_{n_1\geq\cdots\geq n_m\geq0}\sum_{r=0}^{n_m}
 \frac{\alpha_r q^{\sum_{i=1}^m n_i(n_i+1)}}
 {\left(\prod_{i=1}^{m-1}(q)_{n_i-n_{i+1}}\right)
  (q)_{n_m-r}(q)_{n_m+r+1}}\nonumber\\
 &=\sum_{r\geq0}\alpha_rG_m(r).
\end{align}

It remains to evaluate $G_\ell(r)$. Fix $r$, and suppose first
that $\ell\geq2$. For fixed $n_1,\ldots,n_{\ell-1}$, the
innermost sum in \eqref{eq:chain-kernel-definition} is
\[
 \sum_{j=r}^{n_{\ell-1}}
 \frac{q^{j(j+1)}}
 {(q)_{n_{\ell-1}-j}(q)_{j-r}(q)_{j+r+1}}
 =\frac{q^{r(r+1)}}
 {(q)_{n_{\ell-1}-r}(q)_{n_{\ell-1}+r+1}},
\]
by \eqref{eq:kernel} with upper index $n_{\ell-1}$.
Substitution into the outer sum gives
\begin{align}\label{eq:chain-kernel-recurrence}
 G_\ell(r)
 &=q^{r(r+1)}
 \sum_{n_1\geq\cdots\geq n_{\ell-1}\geq r}
 \frac{q^{\sum_{i=1}^{\ell-1}n_i(n_i+1)}}
 {\left(\prod_{i=1}^{\ell-2}(q)_{n_i-n_{i+1}}\right)
  (q)_{n_{\ell-1}-r}(q)_{n_{\ell-1}+r+1}}\nonumber\\
 &=q^{r(r+1)}G_{\ell-1}(r).
\end{align}
In particular, the two terminal denominator factors have
the same form as before, with $n_\ell$ replaced by
$n_{\ell-1}$. This permits the next application of
\eqref{eq:kernel}.

After $\ell-1$ such finite summations, only $G_1(r)$ remains.
The infinite identity \eqref{eq:infinitekernel} yields
\[
 G_1(r)=\sum_{n_1=r}^\infty
 \frac{q^{n_1(n_1+1)}}{(q)_{n_1-r}(q)_{n_1+r+1}}
 =\frac{q^{r(r+1)}}{(q)_\infty}.
\]
Consequently,
\[
 G_\ell(r)
 =q^{(\ell-1)r(r+1)}G_1(r)
 =\frac{q^{\ell r(r+1)}}{(q)_\infty},
\]
which proves \eqref{eq:chain-kernel-evaluation}.

Combining \eqref{eq:chain-reindexing} with
\eqref{eq:chain-kernel-evaluation}, we obtain
\begin{align*}
 \mathcal T_m(q)
 &=\frac1{(q)_\infty}
   \sum_{r\geq0}\alpha_r q^{m r(r+1)}\\
 &=\frac1{(q)_\infty}
   \sum_{r\geq0}(-1)^r(2r+1)
   q^{r(r+1)/2+m r(r+1)}\\
 &=\frac1{(q)_\infty}
   \sum_{r\geq0}(-1)^r(2r+1)
   q^{(2m+1)r(r+1)/2}.
\end{align*}
Apply \eqref{eq:Jacobi} with $q$ replaced by
$q^{2m+1}$. This gives
\[
 \sum_{r\geq0}(-1)^r(2r+1)q^{(2m+1)r(r+1)/2}
 =(q^{2m+1};q^{2m+1})_\infty^3
\]
and proves \eqref{eq:chain-evaluation}. Finally,
\eqref{eq:chainsum} gives
\[
 \mathcal S_m(q)
 =\frac{\mathcal T_m(q)}{(q)_\infty^2}
 =\frac{(q^{2m+1};q^{2m+1})_\infty^3}{(q)_\infty^3},
\]
as asserted in \eqref{eq:Sidentity}.
\end{proof}

\section{The vacuum character and classical freeness}
\label{sec:vacuum-character}

We first derive the unshifted vacuum character from the
boundary-admissible affine character formula. This also specifies the
normalization used in the Hilbert-series comparison.

\begin{lemma}\label{lem:vacuum-character}
Let $p=2m+1\geq3$, $k=-2+2/p$, and $V=L_k(\slTwo)$. Then
\begin{equation}\label{eq:vacuum-character-product}
 \chi_p(q):=\operatorname{tr}_V q^{L_0}
 =\frac{(q^p;q^p)_\infty^3}{(q)_\infty^3}.
\end{equation}
\end{lemma}
\begin{proof}
Initially let $\tau$ lie in the upper half-plane and put
$q=e^{2\pi i\tau}$. The Sugawara central charge is
\begin{equation}\label{eq:vacuum-central-charge}
 c=\frac{3k}{k+2}
 =\frac{3(-2+2/p)}{2/p}=3(1-p).
\end{equation}
The normalized ordinary character is therefore
\[
 \operatorname{ch}^{\mathrm{norm}}_V(\tau)
 :=\operatorname{tr}_V q^{L_0-c/24}
 =q^{-c/24}\chi_p(q).
\]

In \cite[Example~1, equation~(8)]{KW}, set $u=p$, $j=0$, and
$t=0$. The weight indexed by $j=0$ is the vacuum weight. Their
formula for the normalized affine character gives
\begin{equation}\label{eq:vacuum-theta-quotient}
 \operatorname{ch}^{\mathrm{norm}}_V(\tau)
 =\lim_{z\to0}
 \frac{\vartheta_{11}(p\tau,z)}{\vartheta_{11}(\tau,z)},
\end{equation}
where $z$ is the Cartan variable. Specializing this variable to zero
forgets the finite-dimensional Cartan weights. The boundary-admissible
characters are also given in the unshifted convention in
\cite[Section~2.1.2]{AM}.

To evaluate the limit explicitly, put
\[
 \eta(\tau)=q^{1/24}(q)_\infty.
\]
Recall  the odd Jacobi theta function determined by
the product
\begin{equation}\label{eq:odd-theta-product}
 \vartheta_{11}(\tau,z)
 =2q^{1/8}\sin(\pi z)
  \prod_{n\geq1}(1-q^n)
  \bigl(1-2\cos(2\pi z)q^n+q^{2n}\bigr).
\end{equation}
Changing the sign of the odd theta function leaves the quotient in
\eqref{eq:vacuum-theta-quotient} unchanged. At $z=0$ we have
\[
 1-2\cos(2\pi z)q^n+q^{2n}=(1-q^n)^2,
\]
and differentiation of \eqref{eq:odd-theta-product} gives
\[
 \left.\frac{\partial}{\partial z}
       \vartheta_{11}(\tau,z)\right|_{z=0}
 =2\pi q^{1/8}(q)_\infty^3
 =2\pi\eta(\tau)^3.
\]
Thus both theta functions in \eqref{eq:vacuum-theta-quotient}
have a simple zero at $z=0$, and
\begin{equation}\label{eq:vacuum-normalized-eta}
 \operatorname{ch}^{\mathrm{norm}}_V(\tau)
 =\frac{\eta(p\tau)^3}{\eta(\tau)^3}.
\end{equation}

Finally,
\[
 \frac{\eta(p\tau)^3}{\eta(\tau)^3}
 =q^{(p-1)/8}
  \frac{(q^p;q^p)_\infty^3}{(q)_\infty^3}.
\]
Since $c/24=(1-p)/8$, removing the character normalization yields
\begin{align*}
 \chi_p(q)
 &=q^{c/24}\operatorname{ch}^{\mathrm{norm}}_V(\tau)\\
 &
 =\frac{(q^p;q^p)_\infty^3}{(q)_\infty^3}.
\end{align*}
Equality of these convergent expansions for $|q|<1$ gives the stated
identity of formal power series.
\end{proof}

\begin{proof}[Proof of Theorem \ref{thm:byproduct}]
The proof is clear by   (\ref{eq:chainsum}),  (\ref{eq:Sidentity}) and Lemma \ref{lem:vacuum-character}.
\end{proof}

\begin{proof}[Proof of Theorem~\ref{thm:main}]
By Lemma~\ref{lem:vacuum-character}, the Hilbert series of $V$ is
$\chi_{2m+1}(q)$. The Li filtration preserves conformal weight,
and taking its associated graded does not change the dimension of
any conformal-weight component. Hence
\[
 H_{\gr^F V}(q)=H_V(q)=\chi_{2m+1}(q).
\]
Combining \eqref{eq:character-lower-bound}, \eqref{eq:degeneration},
\eqref{eq:Sm}, and \eqref{eq:Sidentity}, we obtain the coefficientwise
inequalities
\[
 \chi_{2m+1}(q) \leq H_{J_\infty R_V}(q) \leq H_{B_m}(q)\leq H_{C_m}(q)
 \leq\mathcal S_m(q)=\chi_{2m+1}(q).
\]
Thus all inequalities are equalities. For each conformal weight
$N\geq0$, the canonical map induces a surjection
\[
 \pi_N:B_m[N]\twoheadrightarrow(\gr^F V)[N]
\]
between finite-dimensional vector spaces of equal dimension.
Thus $\ker\pi_N=0$ for each $N$, and hence $\ker\pi=0$.
Both two maps in  \eqref{eq:arc-surjections} are therefore isomorphisms, and hence $V$ is
classically free.
\end{proof}

\end{document}